\newtheorem{Theorem}{\bf Theorem}
\newtheorem{lemma}[Theorem]{\bf Lemma}
\newtheorem{proposition}[Theorem]{\bf Proposition}
\newtheorem{corollary}[Theorem]{\bf Corollary}
\newtheorem{definition}[Theorem]{\bf Definition}
\newtheorem{remark}[Theorem]{\bf Remark}
\newtheorem{theorem}[Theorem]{\bf Theorem}
\def\qed{\hfill$\Box$}
\newcommand{\be}{\begin{equation}}
\newcommand{\ee}{\end{equation}}
\def\hpic #1 #2 {\mbox{$\begin{array}[c]{l} 
\epsfig{file=#1,height=#2}\end{array}$}}
\def\wpic #1 #2 {\mbox{$\begin{array}[c]{l} 
\epsfig{file=#1,width=#2}\end{array}$}}
\begin{document}

\title[Crossed products and the Drinfeld double]{Note on infinite iterated crossed products of Hopf algebras and the Drinfeld double}

\author{Sandipan De}
\author{Vijay Kodiyalam}
\address{The Institute of Mathematical Sciences, Chennai, India}
\email{sandipande@imsc.res.in,vijay@imsc.res.in}


\subjclass{Primary 16T05; 16S40}


\begin{abstract} 
For a finite dimensional Hopf algebra we show that
an associated natural inclusion of infinite crossed products is the crossed product by the Drinfeld double, and that this
is a characterisation of the double.
\end{abstract}
\maketitle

\section{introduction}

The motivation for this paper comes from a series of talks
delivered by Prof. Masaki Izumi at the Institute of Mathematical Sciences, Chennai in March 2012, during one
of which he asserted that for a Kac algebra subfactor, a
related subfactor to its asymptotic inclusion comes from
an outer action of its Drinfeld double. This is a folklore
result in subfactor theory and since we were unable to locate a precise reference, it seemed desirable to write
down a formal proof. However, in the process of trying to prove this,
we noticed a purely algebraic result that also seemed quite
interesting and it is this algebraic result that is the focus
of this paper (rather than the original subfactor result, which involves
more analysis and to which we hope to return in a future publication).
One advantage of this algebraic approach is that it applies to Hopf algebras
that are not semisimple, in contrast to the analytic case.

Our main result is roughly the following. Given a finite dimensional Hopf algebra $H$, we associate a certain
natural inclusion of (infinite-dimensional) algebras $A \subseteq B$ to it and show that $B$ is the crossed product (also known as the smash product in Hopf algebra literature) of $A$ by $D(H)$ where $D(H)$ is the Drinfeld double of $H$. More significantly, we show that $D(H)$ is the only finite-dimensional Hopf algebra with this property thus producing a context in which the Drinfeld double arises very naturally.

\section{Iterated crossed products}

Throughout this paper, 
we work over a fixed but arbitrary ground field $k$.
All algebras in this paper will be unital $k$-algebras and possibly infinite-dimensional. However, the Hopf algebras we consider will always be finite-dimensional. Subalgebras will always refer to unital subalgebras.

We will assume familiarity
only with the basics of Hopf algebras as in \cite{Kss1995} or in \cite{Mjd2002}, as also the existence and uniqueness of integrals for finite-dimensional Hopf algebras for which we
refer to the beautiful pictorial treatment in \cite{KffRdf2000},
which offers a detailed exposition of the formalism of \cite{Kpr1996}.
Our treatment is otherwise self contained.

Suppose that $A$ is an algebra and $H = (H,\mu,\eta,\Delta,\epsilon,S)$ is a finite-dimensional Hopf algebra. By an action of $H$ on $A$
we will mean a linear map $\alpha : H \rightarrow End(A)$
(references to endomorphisms without further qualification will be to $k$-linear endomorphisms)
satisfying (i) $\alpha_1 = id_A$, (ii) $\alpha_{xy} =\alpha _x \circ \alpha _y$, (iii) $\alpha_x(1_A) = \epsilon(x) 1_A$ and
(iv) $\alpha_x(ab) = \alpha_{x_1}(a)\alpha_{x_2}(b)$, for all $x,y \in H$ and $a,b \in A$. To clarify notation, $\alpha_x$ stands for $\alpha(x)$ and
$\Delta(x)$ is denoted by $x_1 \otimes x_2$ (a simplified version of the Sweedler coproduct notation).

We draw the reader's attention to a notational abuse of which we will often be guilty. We denote elements of
a tensor product as decomposable tensors with the understanding that there is an implied omitted summation
(just as in our simplified Sweedler notation).
Thus, when we write `suppose $f \otimes x \in H^* \otimes H$', we mean `suppose $\sum_i f^i \otimes x^i \in H^* \otimes H$' (for some $f^i \in H^*$ and $x^i \in H$, the sum over a finite index set).

Given an action of $H$ on $A$, we may define the crossed product algebra (or the smash product algebra) denoted $A \rtimes_\alpha H$ (or mostly, simply as $A \rtimes H$, when the action is understood) to be the algebra with
underlying vector space $A \otimes H$ (where we denote $a \otimes x$ by $a \rtimes x$) and multiplication
defined by 
$$
(a \rtimes x)(b \rtimes y) = a\alpha_{x_1}(b) \rtimes x_2y.
$$
This is an algebra with unit $1_A \rtimes 1_H$ and
there are natural inclusions of algebras $A \subseteq A \rtimes H$ given by $a \mapsto a \rtimes 1_H$ and $H \subseteq A \rtimes H$ given by $x \mapsto 1_A \rtimes x$.
We note that while the crossed or smash product construction is a special case of one that
involves, in addition, twisting by a 2-cocyle of $H$, in this paper, it suffices
to consider the case of the trivial cocycle, which is the one discussed above.

Borrowing terminology from subfactor theory, we define 
an inclusion $A \subseteq B$ of algebras to be irreducible if the relative commutant $A^\prime \cap B$ (which is the centraliser algebra of $A$ in $B$, also denoted by
$C_B(A)$ or $B^A$) 
is just $k1_B$, i.e., if the only elements of $B$ that commute with all elements of $A$ are scalar multiples of its identity element. We also define an action of $H$ on $A$ to be outer if
the inclusion $A \subseteq A \rtimes H$ is irreducible.

The following lemma, whose proof we omit,  is a simple and useful characterisation of crossed products without explicit reference to an action.
We will say that two algebras
containing an algebra $A$ are isomorphic as algebras over $A$, if they are isomorphic by an isomorphism that restricts to the identity on $A$.

\begin{lemma}\label{simple} Suppose that $B$ is an algebra with subalgebras $A$ and $H$, where $H$ is further equipped with a comultiplication and antipode that make it a Hopf algebra, and such that:
\begin{enumerate}
\item[(i)] The restriction of the multiplication map $\mu : A \otimes H \rightarrow B$ is a linear isomorphism, and
\item[(ii)] For all $x \in H$ and $a \in A$, $x_1aSx_2 \in A.$
\end{enumerate}
Then $\alpha: H \rightarrow End(A)$ defined by $\alpha_x(a) = x_1aSx_2$ is an action of $H$ on $A$ and the crossed product algebra $A \rtimes_\alpha H$ is isomorphic to $B$ as an algebra over $A$. \qed
\end{lemma}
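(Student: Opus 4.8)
The proof splits naturally into two parts: first verifying that $\alpha$ is a genuine action, and then producing the isomorphism. For the first part, the plan is to check conditions (i)--(iv) of the definition of an action directly from the formula $\alpha_x(a) = x_1 a S x_2$, where all products are taken in $B$. Here hypothesis (ii) is exactly what guarantees that each $\alpha_x$ maps $A$ into $A$, so that $\alpha$ is well defined as a map $H \to End(A)$. Axiom (i), $\alpha_1 = id_A$, follows from $\Delta(1) = 1 \otimes 1$ and $S1 = 1$; axiom (iii), $\alpha_x(1) = \epsilon(x)1$, is immediately the antipode identity $x_1 S x_2 = \epsilon(x)1$. Axiom (ii), multiplicativity $\alpha_{xy} = \alpha_x \circ \alpha_y$, reduces after expanding both sides to the anti-multiplicativity $S(x_2 y_2) = S y_2 \, S x_2$ of the antipode together with the fact that $\Delta$ is an algebra map.

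The one calculation requiring care is axiom (iv). Expanding $\alpha_{x_1}(a)\alpha_{x_2}(b)$ and using coassociativity to pass to the three-fold coproduct $x_1 \otimes x_2 \otimes x_3$, one obtains a product in $B$ in which two adjacent coproduct legs appear in the combination $S x_2 \, x_3$. The key point---and the main place where the Sweedler bookkeeping must be handled carefully---is that these two legs genuinely arise from a single application of $\Delta$ to the middle factor of the iterated coproduct, so that the antipode axiom collapses them to $\epsilon(x_2)1$; the counit axiom then recovers $x_1(ab)S x_2 = \alpha_x(ab)$. I expect this collapsing step, rather than anything conceptual, to be the only genuine obstacle.

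For the second part, the natural candidate is the map $\phi : A \rtimes_\alpha H \to B$ sending $a \rtimes x$ to the product $ax$ in $B$; that is, $\phi$ is precisely the restriction of $\mu$ to $A \otimes H$. By hypothesis (i) this is a linear isomorphism, and it visibly restricts to the identity on $A$ since $\phi(a \rtimes 1) = a$, so it remains only to check that $\phi$ is an algebra homomorphism. Computing $\phi((a \rtimes x)(b \rtimes y)) = a\,\alpha_{x_1}(b)\,x_2 y$ and expanding $\alpha_{x_1}(b) = x_1 b S x_2$ produces the product $a x_1 b\,(S x_2\, x_3)\,y$ in $B$; the same collapsing of $S x_2\, x_3$ to $\epsilon(x_2)1$ used above reduces this to $axby = \phi(a \rtimes x)\phi(b \rtimes y)$. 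Hence $\phi$ is an isomorphism of algebras over $A$, completing the proof.
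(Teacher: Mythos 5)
Your proof is correct, and since the paper explicitly omits the proof of this lemma as "simple," your direct verification—checking the four action axioms via the antipode identities and showing that the multiplication map $a \rtimes x \mapsto ax$ is an algebra isomorphism using the collapse of $Sx_2\,x_3$ to $\epsilon(x_2)1$—is exactly the argument the authors intend. All steps, including the careful Sweedler bookkeeping in axiom (iv) and in the homomorphism check, are sound.
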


Recall that if $\alpha$ is an action of a Hopf algebra $H$ on an algebra $A$, then there is a natural action $\beta$ of the dual Hopf algebra $H^*$ on the crossed product algebra $A \rtimes H$ defined by 
$\beta_f(a \rtimes x) = f(x_2)(a \rtimes x_1)$. In the sequel,
we will use this action without further specification (and in particular, for the action of $H^*$ on $H$ and that of $H$ on $H^*$).

We now review infinite iterated crossed products. Our treatment
closely follows that of \cite{JjoSnd2009} which treats the
case when $H$ is a Kac algebra.
For $i \in {\mathbb Z}$, define $H^i$ to be
$H^*$ or $H$ according as $i$ is even or odd. For $i \leq j$ define $H^{[i,j]}$ by induction on $j-i$ as $H^i$ if $j=i$
and as $H^{[i,j-1]} \rtimes H^j$ otherwise (for the natural action). The multiplication on $H^{[i,j]}$ is seen (by induction) to be
given by the following formula (when $i,j$ are both even - similar
formulae hold for the other three cases):
\begin{eqnarray*}
(f^i \rtimes x^{i+1} \rtimes \cdots \rtimes f^j)(g^i \rtimes y^{i+1} \rtimes \cdots \rtimes g^j) = \\
\langle x^{i+1}_1 | g^i_2 \rangle
\langle f^{i+2}_1 | y^{i+1}_2 \rangle
\cdots
\langle x^{j-1}_1 | g^{j-2}_2 \rangle
\langle f^j_1 | y^{j-1}_2 \rangle \times \\
f^ig^i_1 \rtimes x^{i+1}_2y^{i+1}_1 \rtimes \cdots \rtimes x^{j-1}_2y^{j-1}_1 \rtimes f^j_2g^j.
\end{eqnarray*}
The multiplication is given pictorially in Figure \ref{fig:multfig}.\begin{figure}[htbp]
\psfrag{fi}{\small $f^i$}
\psfrag{gi}{\small $g^i$}
\psfrag{xip1}{\small $x^{i+1}$}
\psfrag{yip1}{\small $y^{i+1}$}
\psfrag{fip2}{\small $f^{i+2}$}
\psfrag{vdots}{$\vdots$}
\psfrag{gjm2}{\small $g^{j-2}$}
\psfrag{xjm1}{\small $x^{j-1}$}
\psfrag{yjm1}{\small $y^{j-1}$}
\psfrag{fj}{\small $f^j$}
\psfrag{gj}{\small $g^j$}
\psfrag{11}{\tiny $(6,1)$}
\psfrag{12}{\tiny $(6,2)$}
\begin{center}
\includegraphics[height=1.5in]{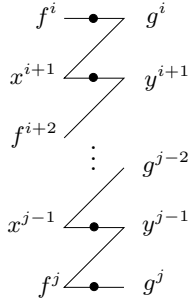}
\caption{Multiplication in $H^{[i,j]}$ for $i,j$ even}
\label{fig:multfig}
\end{center}
\end{figure}
The interpretation of Figure \ref{fig:multfig} is as follows. The dots are to be interpreted as multiplication (in $H$ or in $H^*$), the diagonal lines
as contractions (between $H$ and $H^*$ to get a constant) and the forks as applications of $\Delta$.
Note that $H^{[i,i+1]} = H^i \rtimes H^{i+1}$ is known as the Heisenberg double of $H^i$ (and is isomorphic to a matrix algebra of size $dim(H)$).

The multiplication rule shows that if $p \leq i \leq j \leq q$, the natural inclusion of $H^{[i,j]}$ into $H^{[p,q]}$ is an algebra map. Define the algebra $B$ to be the `union' of all the $H^{[i,j]}$. More precisely, $B$ is the direct limit, over the subset of finite intervals in ${\mathbb Z}$ directed by inclusion,  of the $H^{[i,j]}$.
We may suggestively write $B = H^{(-\infty,\infty)} = \cdots \rtimes H \rtimes H^* \rtimes H \rtimes \cdots$ and represent a typical element of $B$ as $\cdots \rtimes x^{-1} \rtimes f^0 \rtimes x^1 \rtimes \cdots$. We repeat that this means that a typical element of $B$ is in fact a finite sum of such terms. Note that in any such term all but finitely
many of the $f^i$ are $\epsilon$ and all but finitely many of the $x^i$ are $1$. One fact about the infinite iterated crossed product that we will use is that $H^i$ and $H^j$ commute whenever $|i-j| \geq 2$.

Next, we define a subalgebra $A$ of $B$ which, in suggestive notation, is $H^{(-\infty,-1]} \otimes H^{[2,\infty)}$.
A little more clearly, it consists of all (finite sums of) elements $\cdots \rtimes x^{-1} \rtimes f^0 \rtimes x^1 \rtimes \cdots$ of $B$ with $f^0 = \epsilon$ and $x^1 = 1$. Strictly speaking,
if $H^{(-\infty,-1]}$ represents the direct limit of all the
$H^{[-j,-1]}$ for $j \geq 1$ and $H^{[2,\infty)}$ represents the direct limit of all the $H^{[2,j]}$ for $j \geq 2$, then,
these algebras can be identified with commuting subalgebras of $B$,
with the multiplication map being an injective map from
$H^{(-\infty,-1]} \otimes H^{[2,\infty)}$ to $B$, and the image is denoted $A$. As an algebra, $A$ is clearly generated by all the $H^i$ for $i \in {\mathbb Z} \backslash \{0,1\}$.
 
The main object of interest in this paper is the following pair of algebras.

\begin{definition}
For a finite-dimensional Hopf algebra $H$, the inclusion
$A \subseteq B$ of (infinite-dimensional) algebras defined above will be called the derived pair of $H$.
\end{definition}

The following proposition identifying the relative commutant of the derived pair will be very useful. In case $H$ is a Kac algebra, this appears in \cite{Jjo2008}.

\begin{proposition}\label{commutants}
For any $p\in {\mathbb Z}$, the subalgebras $H^{(-\infty,p]}$ and $H^{[p+2,\infty)}$ are mutual commutants in $B$. In particular, the derived pair of $H$ is irreducible.
\end{proposition}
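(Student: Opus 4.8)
The plan is to prove that $H^{(-\infty,p]}$ and $H^{[p+2,\infty)}$ are mutual commutants in $B$, from which irreducibility of the derived pair follows immediately by taking $p=-1$: the relative commutant $A' \cap B$ contains the commutant of $H^{(-\infty,-1]}$, and since $A = H^{(-\infty,-1]} \otimes H^{[2,\infty)}$ contains both halves, any element commuting with all of $A$ must lie in the intersection of the two commutants, which we will see is $k1_B$. So the bulk of the work is the mutual-commutant statement. By the remark that $H^i$ and $H^j$ commute whenever $|i-j| \geq 2$, it is clear that each of $H^{(-\infty,p]}$ and $H^{[p+2,\infty)}$ is \emph{contained in} the commutant of the other; the content of the proposition is the reverse inclusion, i.e. that these containments are equalities.

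My approach would be to reduce to a finite-dimensional computation and use a dimension or nondegeneracy argument. First I would fix $p$ and, without loss of generality (by translating indices and using the symmetry between $H$ and $H^*$), assume $p$ is such that the relevant algebras have a clean form. I would take an arbitrary element $b \in B$ commuting with all of $H^{[p+2,\infty)}$ and show it lies in $H^{(-\infty,p]}$. Since $b$ is a finite sum, it lives in some finite slab $H^{[m,n]}$ with $m \leq p$ and $n \geq p+2$; the key is to show the $H^{j}$-components for $j \geq p+2$ are trivial (i.e. $b \in H^{[m,p]} \otimes (\text{units})$). I would express $b$ in the basis coming from the linear isomorphism $A \otimes H \cong$ (the multiplication map), writing $b = \sum_\ell c_\ell \rtimes (\text{tail term in } H^{[p+1,n]})$ with the $c_\ell \in H^{[m,p]}$ chosen linearly independent, and then impose commutation with carefully chosen elements of $H^{p+2}, H^{p+3}, \ldots$ one index at a time.

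The main obstacle, and the heart of the argument, is the base step: showing that an element of the Heisenberg double $H^{[p+1,p+2]} = H^{p+1} \rtimes H^{p+2}$ commuting with all of $H^{p+2}$ must lie in $H^{p+1}$ (together with the companion fact controlling one extra index). This is exactly where the Hopf-algebra structure enters nontrivially: the commutation relation between $H^{p+1}$ and $H^{p+2}$ (one of them $H$, the other $H^*$) is given by the contraction-and-coproduct multiplication rule displayed above, and the assertion amounts to the nondegeneracy of the pairing $\langle \cdot \mid \cdot \rangle$ between $H$ and $H^*$ together with the bijectivity of the antipode $S$. Concretely, the condition that $b$ commutes with $1_A \rtimes x$ for all $x \in H^{p+2}$ forces, via the multiplication formula, a system of linear identities in which the only solutions have the $H^{p+2}$-component supported on scalars; the pictorial calculus of Figure~\ref{fig:multfig} should make this manipulation transparent, since commuting past a generator corresponds to sliding a strand through a fork-and-cap diagram.

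Having settled the two-step base case, I would propagate it outward by induction on the length of the tail $n - (p+2)$: assuming $b$ commutes with everything in $H^{[p+2,\infty)}$, I first use commutation with the outermost factor $H^n$ to conclude (via the base-case mechanism applied at the top of the tower, using that $H^{n-1}$ and $H^n$ generate a Heisenberg double while $H^j$ with $j \leq n-2$ commutes with $H^n$) that the $H^n$-component of $b$ is a unit, then strip it off and apply the inductive hypothesis to $H^{[p+2,n-1]}$. This reduces the tail factor by one at each stage until nothing beyond index $p$ survives, giving $b \in H^{(-\infty,p]}$. The symmetric argument, exchanging the roles of $H$ and $H^*$ and reversing the direction of the tower, yields that the commutant of $H^{(-\infty,p]}$ is contained in $H^{[p+2,\infty)}$, completing the proof of mutual commutation; irreducibility is then the special case noted above.
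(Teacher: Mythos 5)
There is a genuine gap at what you yourself identify as the heart of the argument. Your base step asserts that an element of the Heisenberg double $H^{[p+1,p+2]} = H^{p+1} \rtimes H^{p+2}$ commuting with all of $H^{p+2}$ must lie in $H^{p+1}$. This is false: if $H^{p+2}$ is commutative as an algebra (take $H = k[G]$ with $G$ abelian, so that both $H$ and $H^*$ are commutative), then every element of $1 \rtimes H^{p+2}$ commutes with all of $1 \rtimes H^{p+2}$ but only the scalars lie in $H^{p+1}$; more generally the centraliser of $H^{p+2}$ in the Heisenberg double is a $\dim(H)$-dimensional algebra isomorphic to $End_{H^{p+2}}(H^{p+1}) \cong (H^{p+2})^{op}$, which is not contained in $H^{p+1}$. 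Moreover, even if the claim were true it would not suffice: to place $b$ in $H^{(-\infty,p]}$ you must kill every component with index $\geq p+1$, not just $\geq p+2$ (note the off-by-one in your phrase ``the $H^j$-components for $j \geq p+2$ are trivial''), and commutation with $H^{p+2}$ --- an index \emph{inside} the slab carrying $b$ --- cannot achieve this. The same defect recurs in your inductive step, where you invoke commutation with the outermost factor $H^n$ of the slab $H^{[m,n]}$ containing $b$, rather than with an index strictly outside that slab.

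The missing ingredient is the left integral. The paper isolates the correct stripping mechanism in Lemma \ref{commlemm}: an element of $H^{[i,j]}$ commuting with a nonzero left integral of $H^{i-1}$ --- one index strictly \emph{below} the slab --- already lies in $H^{[i+1,j]}$. Its proof is not merely ``nondegeneracy of the pairing plus bijectivity of $S$'' as you suggest; it rests on the injectivity of the Fourier transform $f \mapsto f(h_2)h_1$ attached to the integral $h$. Granting this lemma, the inclusion $(H^{(-\infty,p]})^\prime \subseteq H^{[p+2,\infty)}$ is a one-step descent: choose $i$ maximal with $b \in H^{[i,j]}$; if $i \leq p+1$ then $H^{i-1} \subseteq H^{(-\infty,p]}$, so $b$ commutes with its integral and the lemma contradicts maximality of $i$. (A top-down version of your induction could be salvaged by using a left integral of $H^{n+1}$, not $H^n$, to kill the $H^n$-component.) The reverse inclusion is then obtained, much as you propose, from a flip anti-automorphism exchanging the two subalgebras, and your deduction of irreducibility from the mutual-commutant statement is fine. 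But as written the central mechanism of your proof does not work and must be replaced by the integral argument.
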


The main observation in the proof of Proposition \ref{commutants} is contained in the following lemma.
Recall that a left integral of $H$ is an element $h \in H$ that satisfies $xh = \epsilon(x)h$ for all $x \in H$. The space of left integrals of $H$ is $1$-dimensional and if $h \in H$ is 
a left integral, then 
$S^{-1}h_1x \otimes h_2 = S^{-1}h_1 \otimes xh_2$ for all $x \in H$.
Further, the (Fourier transform) map $f \mapsto f(h_2)h_1 : H^* \rightarrow H$ is a linear isomorphism (and in fact, a $H^*$-module isomorphism for the natural $H^*$-module structure
on $H$). 
%
%
%
%
%
%
%

\begin{lemma}\label{commlemm}
For $i \leq j$, the set of elements of $H^{[i,j]}$ that commute with a non-zero left integral in $H^{i-1}$ is precisely $H^{[i+1,j]}$.
\end{lemma}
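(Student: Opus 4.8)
The plan is to prove both inclusions simultaneously by induction on $j-i\ge 0$, reducing the assertion for $H^{[i,j]}$ to the same assertion for $H^{[i,j-1]}$, so that the entire argument comes to rest on a single computation in the Heisenberg double $H^{[i-1,i]}$, namely the base case $j=i$. Throughout, fix a non-zero left integral $h$ in $H^{i-1}$; note that the inclusion $H^{[i+1,j]}\subseteq\{b:bh=hb\}$ is immediate, since by the commutation fact recorded in the text $H^{i-1}$ commutes with every $H^m$ for $m\ge i+1$, hence with all of $H^{[i+1,j]}$.

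For the inductive step, suppose $j>i$. I would choose a basis $\{t_\beta\}$ of the finite-dimensional space $H^j$ and write a general $b\in H^{[i,j]}=H^{[i,j-1]}\rtimes H^j$ uniquely as $b=\sum_\beta b_\beta \rtimes t_\beta$ with $b_\beta\in H^{[i,j-1]}$. Since $|(i-1)-j|\ge 2$, the elements $h$ and $t_\beta$ commute in $B$, so computing there gives
\[
bh=\sum_\beta b_\beta t_\beta h=\sum_\beta (b_\beta h)\rtimes t_\beta,\qquad hb=\sum_\beta (h b_\beta)\rtimes t_\beta,
\]
where $b_\beta h$ and $h b_\beta$ lie in $H^{[i-1,j-1]}$. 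Because the multiplication map gives a linear isomorphism $H^{[i-1,j-1]}\otimes H^j\cong H^{[i-1,j]}$ and the $t_\beta$ are linearly independent, $bh=hb$ holds if and only if $b_\beta h=h b_\beta$ for every $\beta$. By the induction hypothesis this says exactly that each $b_\beta\in H^{[i+1,j-1]}$, i.e. that $b\in H^{[i+1,j-1]}\rtimes H^j=H^{[i+1,j]}$, as desired.

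This leaves the base case $j=i$, which is the crux: I must show that the centraliser of $h$ inside $H^i$ is $k1_{H^i}$ (here $H^{[i+1,i]}$ is the empty product $k$). Take $i$ even, so that $H^{i-1}=H$, $H^i=H^*$ and $h\in H$ is a non-zero left integral; the case $i$ odd is identical after interchanging the roles of $H$ and $H^*$. Writing $h=h\rtimes\epsilon$ and $f=1_H\rtimes f$ in $H^{[i-1,i]}=H\rtimes H^*$ and using the crossed-product multiplication together with the action $\beta_g(y)=g(y_2)y_1$ of $H^*$ on $H$, one computes $hf=h\rtimes f$ and $fh=f_1(h_2)\,h_1\rtimes f_2$. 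Hence $f$ commutes with $h$ precisely when $f_1(h_2)\,h_1\otimes f_2=h\otimes f$ in $H\otimes H^*$. Applying $\mathrm{id}\otimes\epsilon_{H^*}$ (that is, evaluating the $H^*$-leg at $1_H$) and using $f_1(h_2)f_2(1_H)=f(h_2)$ collapses this to $f(h_2)\,h_1=f(1_H)\,h$. Since $\epsilon(h_2)h_1=h$, the right-hand side is $f(1_H)$ times the Fourier transform of $\epsilon$, so the identity reads $\mathcal{F}(f)=\mathcal{F}\bigl(f(1_H)\epsilon\bigr)$, where $\mathcal{F}(g)=g(h_2)h_1$. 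As $h$ is a non-zero left integral, $\mathcal{F}$ is a linear isomorphism, so $f=f(1_H)\epsilon\in k\epsilon=k1_{H^i}$; conversely scalars certainly commute with $h$, completing the base case.

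I expect the main obstacle to be exactly this base case; the induction is pure bookkeeping. Its only genuine input beyond the crossed-product multiplication is the injectivity of the integral Fourier transform, which is what reduces the a priori complicated commutation condition to one linear equation. The point needing the most care is the consistent handling of the two parities of $i$, since the formulas for $hf$ and $fh$—and hence the final appeal to $\mathcal{F}$—must be dualised correctly (correct form of the $H^*$-action and of the coproduct on $H^*$) in the odd case.
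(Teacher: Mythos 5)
Your proof is correct and uses essentially the same argument as the paper: the crux in both is the two-term identity $h\rtimes f = f_1(h_2)\,h_1\rtimes f_2$ combined with injectivity of the Fourier transform $g\mapsto g(h_2)h_1$ after evaluating the $H^*$-leg at $1_H$. The only difference is organizational --- the paper reduces to this two-term equation in a single step by comparing coefficients against a basis of $H^{[i+1,j]}$, whereas you reach it by induction on $j-i$, peeling off one tensor factor at a time; this also makes explicit the degenerate case $j=i$ (with $H^{[i+1,i]}=k1$) that the paper leaves implicit.
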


\begin{proof} First suppose that $i$ is even, so that $H^{i-1} = H$. 
Since elements of $H^{[i+1,j]}$ certainly commute with all elements of $H^{i-1}$, it suffices to see that an arbitrary element, say $f^i \rtimes x^{i+1} \rtimes \cdots \in H^{[i,j]}$ that commutes with a non-zero left integral, say $h^{i-1} \in H^{i-1}$, is actually in
$H^{[i+1,j]}$.

The commutativity condition is equivalent to the equation
$$
h^{i-1} \rtimes f^i \rtimes x^{i+1} \rtimes \cdots = f^i_1(h^{i-1}_2) h^{i-1}_1 \rtimes f^i_2 \rtimes x^{i+1} \rtimes \cdots.
$$
Comparing coefficients of a basis of $H^{[i+1,j]}$ on both sides, we get
$h^{i-1} \rtimes f^i  = f^i_1(h^{i-1}_2) h^{i-1}_1 \rtimes f^i_2$. Evaluating the
second component on $1$ gives $f^i(1)h^{i-1} = f^i(h^{i-1}_2)h^{i-1}_1$.
But now, since $f^i(1)h^{i-1} = f^i(1) \epsilon(h^{i-1}_2)h^{i-1}_1$, the injectivity of the Fourier transform map implies that $f^i = f^i(1) \epsilon$.
Therefore $f^i \rtimes x^{i+1} \rtimes \cdots \in H^{[i+1,j]}$, as desired.

A similar proof is valid if $i$ is odd, replacing $H$ with $H^*$.
%
\end{proof}

\begin{proof}[Proof of Proposition \ref{commutants}]
Since $H^i$ and $H^j$ commute for $|i-j| \geq 2$, the subalgebras $H^{(-\infty,p]}$ and $H^{[p+2,\infty)}$ of $B$ commute with each other and therefore are contained in
the commutants of one another.

To show that $(H^{(-\infty,p]})^\prime \subseteq H^{[p+2,\infty)}$, take $1 \neq b \in (H^{(-\infty,p]})^\prime$, 
 and choose $i$ largest so that $b \in H^{[i,j]}$ for some $j$. It suffices to see that $i \geq p+2$. Suppose that $i \leq p+1$ so that $i-1 \leq p$. Now $b \in H^{[i,j]}$ and commutes with $H^{i-1}$ (since $H^{i-1} \subseteq H^{(-\infty,p]}$). By Lemma \ref{commlemm} it follows that
$b \in H^{[i+1,j]}$ contradicting choice of $i$.

To see that $(H^{[p+2,\infty)})^\prime \subseteq H^{(-\infty,p]}$, note that the `flip map about $p+1$' from $B$ to $B$ defined by  
\begin{eqnarray*}
\cdots  \rtimes f^{p-1} \rtimes x^p \rtimes f^{p+1} \rtimes x^{p+2}  \rtimes f^{p+3} \rtimes\cdots &\\
\mapsto \cdots  \rtimes Sf^{p+3} \rtimes S^{-1}x^{p+2} \rtimes Sf^{p+1} \rtimes S^{-1}x^{p}  \rtimes Sf^{p-1} \rtimes \cdots
\end{eqnarray*}
(for $p$ odd, with  a similar definition for $p$ even)
is an
anti-automorphism that interchanges $H^{[p+2,\infty)}$ and $H^{(-\infty,p]}$ and appeal to the previously proved case.

Finally, to see irreducibility of the derived pair, note that
$A^\prime \cap B = (H^{(-\infty,-1]})^\prime \cap (H^{[2,\infty)})^\prime = H^{[1,\infty)} \cap H^{(-\infty,0]} = k1_B$ - as desired.
\end{proof}

\section{The Drinfeld double construction}

We next review the Drinfeld double construction from
\cite{Mjd2002}. The Drinfeld double of a Hopf algebra $H$, denoted $D(H)$, is the Hopf algebra whose underlying vector space is $H^* \otimes H$ and multiplication, comultiplication and antipode specified by the following formulae.
\begin{eqnarray*}
(f \otimes x)(g \otimes y) &=& g_1(Sx_1)g_3(x_3) (g_2f \otimes x_2y),\\
\Delta(f \otimes x) &=& (f_1 \otimes x_1) \otimes (f_2 \otimes x_2), {\mbox { and}}\\
S(f \otimes x) &=& f_1(x_1)f_3(Sx_3) (S^{-1}f_2 \otimes Sx_2).
\end{eqnarray*}
What we actually use is an isomorphic avatar of this, which we denote $\tilde{D}(H)$ which also has underlying vector space $H^* \otimes H$ and structure maps obtained by transporting the structures on $D(H)$ using the invertible map $S \otimes S^{-1} : D(H) = H^* \otimes H \rightarrow H^* \otimes H = \tilde{D}(H)$. It is easily checked that the structure maps for $\tilde{D}(H)$ are given by the following formulae.
\begin{eqnarray*}
(f \otimes x)(g \otimes y) &=& g_1(x_1)g_3(Sx_3) (fg_2 \otimes yx_2),\\
\Delta(f \otimes x) &=& (f_2 \otimes x_2) \otimes (f_1 \otimes x_1), {\mbox { and}}\\
S(f \otimes x) &=& f_1(Sx_1)f_3(x_3) (S^{-1}f_2 \otimes Sx_2).
\end{eqnarray*}
The Hopf algebra $\tilde{D}(H)^{cop}$ is the Hopf algebra
(also with underlying space $H^* \otimes H$) with structure maps given by:
\begin{eqnarray*}
(f \otimes x)(g \otimes y) &=& g_1(x_1)g_3(Sx_3) (fg_2 \otimes yx_2),\\
\Delta(f \otimes x) &=& (f_1 \otimes x_1) \otimes (f_2 \otimes x_2), {\mbox { and}}\\
S(f \otimes x) &=& f_1(Sx_1)f_3(x_3) (Sf_2 \otimes S^{-1}x_2).
\end{eqnarray*}
For ease of notation we will denote the Hopf algebra $\tilde{D}(H)^{cop}$ by $L$. By construction, as a Hopf algebra, it is isomorphic to $D(H)^{cop}$.

\begin{lemma}\label{hop}
$\tilde{D}(H^{cop}) \cong \tilde{D}(H)^{cop}$ as Hopf algebras.
\end{lemma}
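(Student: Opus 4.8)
The plan is to write down the structure maps of $\tilde{D}(H^{cop})$ explicitly and then exhibit an isomorphism onto $L = \tilde{D}(H)^{cop}$ by hand. First I would record the Hopf structure of $H^{cop}$ and of its dual. Recall that $H^{cop}$ has the same multiplication, unit and counit as $H$, comultiplication $x \mapsto x_2 \otimes x_1$, and antipode $S^{-1}$; consequently its iterated comultiplication reverses all the Sweedler legs, so that $\Delta_{H^{cop}}^{(2)}(x) = x_3 \otimes x_2 \otimes x_1$. Dually, $(H^{cop})^* = (H^*)^{op}$: it has the same coalgebra as $H^*$ but the opposite multiplication (and antipode $S_{H^*}^{-1}$). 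Substituting these data into the defining formulas for $\tilde{D}$, with $H$ replaced by $H^{cop}$ throughout, I would obtain after the obvious relabelling the multiplication $(f \otimes x)(g\otimes y) = g_1(x_3)\,g_3(S^{-1}x_1)\,(g_2 f \otimes yx_2)$, the comultiplication $\Delta(f\otimes x) = (f_2 \otimes x_1)\otimes (f_1 \otimes x_2)$, and the corresponding antipode for $\tilde{D}(H^{cop})$.

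Next I would compare these with the structure maps of $L$ recorded above and produce the isomorphism. The comultiplications already show that the identity map cannot work: in $\tilde{D}(H^{cop})$ the $H^*$-legs of $\Delta$ appear reversed relative to $L$ while the $H$-legs do not. This forces any structure-preserving candidate of product form to twist the $H^*$-factor by a coalgebra anti-automorphism and to leave the $H$-factor untouched; checking against the multiplication then singles out the dual antipode, and I claim that $\phi(f \otimes x) = S^{-1}f \otimes x$ (where $S^{-1}f$ denotes $f \circ S^{-1}$, the inverse of the antipode of $H^*$) is the desired map. I would then verify directly that $\phi$ carries the product, coproduct and unit of $\tilde{D}(H^{cop})$ to those of $L$; these computations are short once one uses that $S^{-1}$ on $H^*$ is simultaneously an algebra and a coalgebra anti-automorphism, so that it reverses triple coproducts and matches the $S$- and $S^{-1}$-twisted pairings on $H$ occurring on the two sides. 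Since $\phi$ is visibly bijective, with inverse $f\otimes x \mapsto Sf \otimes x$, it is a bialgebra isomorphism and hence automatically commutes with the antipodes; alternatively one checks $\phi \circ S_{\tilde{D}(H^{cop})} = S_L \circ \phi$ directly, which is equally quick.

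The main obstacle is entirely bookkeeping: getting the conventions for $\tilde{D}(H^{cop})$ right. The two places that are easy to mishandle are the reversal of the iterated comultiplication of $H^{cop}$ (the source of the $x_3$ and $x_1$ in the multiplication) and the opposite product on $(H^*)^{op}$ (which turns $fg_2$ into $g_2f$); getting either wrong produces a spurious formula for which no simple twist is an isomorphism. Once the structure maps are recorded correctly, identifying the twist $S^{-1}$ on the $H^*$-leg and verifying the three compatibilities is routine, so the content of the lemma lies in the careful derivation of the $\tilde{D}(H^{cop})$ formulas rather than in the verification itself.
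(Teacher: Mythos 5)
Your proposal is correct and follows essentially the same route as the paper: you derive the same structure maps for $\tilde{D}(H^{cop})$ (your multiplication and comultiplication formulas agree exactly with those recorded in the paper's proof), and your isomorphism $\phi(f \otimes x) = S^{-1}f \otimes x$ is precisely the inverse of the paper's map $S \otimes id_H : \tilde{D}(H)^{cop} \rightarrow \tilde{D}(H^{cop})$. The only difference is that you additionally motivate why the twist must live on the $H^*$-leg, which the paper leaves implicit.
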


\begin{proof} It follows from the above that the structure maps for $\tilde{D}(H^{cop})$ are given by:
\begin{eqnarray*}
(f \otimes x)(g \otimes y) &=& g_1(x_3)g_3(S^{-1}x_1) (g_2f \otimes yx_2),\\
\Delta(f \otimes x) &=& (f_2 \otimes x_1) \otimes (f_1 \otimes x_2), {\mbox { and}}\\
S(f \otimes x) &=& f_1(S^{-1}x_3)f_3(x_1) (Sf_2 \otimes S^{-1}x_2).
\end{eqnarray*}
A direct check now shows that the map $S \otimes id_H : \tilde{D}(H)^{cop} \rightarrow \tilde{D}(H^{cop})$ is a Hopf algebra isomorphism.
\end{proof}

\section{Basic construction, crossed products and recognition}

This section is devoted to a few results that will be used in proving the uniqueness part of our main theorem. The emphasis of these results
is on the `basic construction' - the passage from a unital
algebra inclusion $A \subseteq B$ to the unital algebra
inclusion $B \subseteq C = End(B_A)$ (the algebra of right $A$-linear endomorphisms of $B$) where the inclusion
of $B$ in $C$ is via the left regular representation. 

Many of the results of this section are known - sometimes in greater generality - for Hopf-Galois extensions (in particular for twisted smash products) as in \cite{Kds2005,KrmTkc1981}, including crossed product recognition theorems as in \cite{DoiTkc1986,KdsNks2001}. Proofs are
included here only for completeness.


\begin{lemma}\label{relcomm}
Let $A \subseteq B$ be a unital inclusion of algebras with
 associated basic construction
$B \subseteq C$.
Then the centraliser algebras $B^A$ and $C^B$ are anti-isomorphic.
In particular, $A \subseteq B$ is irreducible if and only if $B \subseteq C$ is irreducible.
\end{lemma}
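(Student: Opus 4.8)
The plan is to identify the centraliser $C^B$ explicitly as the algebra of right multiplications by elements of $B^A$, and then to note that this identification reverses the order of multiplication, which is exactly what produces an anti-isomorphism rather than an isomorphism.

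First I would unwind what it means for $\phi \in C = End(B_A)$ to lie in $C^B$. Recalling that $B$ sits inside $C$ through the left regular representation $b \mapsto L_b$, where $L_b(x) = bx$, the element $\phi$ centralises $B$ exactly when $\phi \circ L_b = L_b \circ \phi$ for all $b \in B$, i.e.\ when $\phi(bx) = b\,\phi(x)$ for all $b,x \in B$. Thus $\phi \in C^B$ iff $\phi$ is left $B$-linear; since $\phi$ is already right $A$-linear by virtue of lying in $C$, this says that $C^B$ is precisely the set of $(B,A)$-bimodule endomorphisms of $B$.

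The next step is the observation that any left $B$-linear endomorphism $\phi$ of $B$ is right multiplication by the single element $c := \phi(1_B)$: indeed $\phi(x) = \phi(x \cdot 1_B) = x\,\phi(1_B) = xc =: R_c(x)$. Imposing the remaining right $A$-linearity, namely $R_c(xa) = R_c(x)a$ for all $x \in B$ and $a \in A$, reads $xac = xca$; taking $x = 1_B$ forces $ac = ca$ for all $a \in A$, that is $c \in B^A$, and conversely any $c \in B^A$ makes $R_c$ a $(B,A)$-bimodule map. Hence $C^B = \{R_c : c \in B^A\}$.

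Finally I would verify that the map $c \mapsto R_c$ from $B^A$ to $C^B$ is an anti-isomorphism. It is clearly linear and bijective, with inverse $\phi \mapsto \phi(1_B)$, and a short computation gives $R_c \circ R_{c'} = R_{c'c}$, so multiplication is reversed. The conceptual crux, and the only point requiring any care, is exactly this: because endomorphisms of $B$ as a \emph{left} module are realised by \emph{right} multiplications, composition comes out in the opposite order, which is what turns the isomorphism into an anti-isomorphism. For the final assertion, this map sends $1_B$ to $id_B = 1_C$ and preserves dimension, so $B^A = k1_B$ if and only if $C^B = k1_C$; that is, $A \subseteq B$ is irreducible precisely when $B \subseteq C$ is.
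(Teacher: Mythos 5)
Your proof is correct and follows essentially the same route as the paper: the paper's one-line proof uses exactly the map $b \mapsto \rho_b$ (right multiplication) and asserts it is an anti-isomorphism, which is what you verify in detail. Your unwinding of $C^B$ as the $(B,A)$-bimodule endomorphisms and the identification of these with right multiplications by elements of $B^A$ is precisely the omitted verification.
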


\begin{proof} The map $B^A \rightarrow C^B$ given by $b \mapsto \rho_b$
where $\rho_b(\tilde{b}) = \tilde{b}b$ is verified to be an anti-isomorphism.
\end{proof}


Before we prove the next theorem analysing the basic construction when $A \subseteq B$ is of the form $A \subseteq A \rtimes H$, we pause to observe the following.

\begin{lemma}\label{difficult}
Every linear map $H \rightarrow A \rtimes H$ is of the form $\lambda_{a \rtimes x} \beta_f$ for $a \otimes x \otimes f \in A \otimes H \otimes H^*$.
\end{lemma}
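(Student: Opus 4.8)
The plan is to make the asserted form completely explicit and then recognise it as the regular representation of a Heisenberg double. First I would compute, for $y \in H$ viewed as $1_A \rtimes y \in A \rtimes H$, that
\[
\lambda_{a \rtimes x}\beta_f(1_A \rtimes y) = f(y_2)\,(a \rtimes x y_1),
\]
using $\beta_f(1_A \rtimes y) = f(y_2)(1_A \rtimes y_1)$ and $(a \rtimes x)(1_A \rtimes y_1) = a \rtimes x y_1$. Thus $\Phi(a \otimes x \otimes f) := \lambda_{a \rtimes x}\beta_f|_H$ defines a linear map $\Phi : A \otimes H \otimes H^* \rightarrow \operatorname{Hom}_k(H, A \rtimes H)$, and the lemma is exactly the assertion that $\Phi$ is surjective. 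Introducing $\psi : H \otimes H^* \rightarrow \operatorname{End}(H)$, $\psi(x \otimes f)(y) = f(y_2) x y_1$, the displayed formula reads $\Phi = J \circ (\mathrm{id}_A \otimes \psi)$, where $J : A \otimes \operatorname{End}(H) \rightarrow \operatorname{Hom}_k(H, A \rtimes H)$ is the natural isomorphism $J(a \otimes \theta)(y) = a \rtimes \theta(y)$ (an isomorphism since $H$ is finite-dimensional). Hence $\Phi$ is bijective as soon as $\psi$ is, and the whole statement reduces to the case $A = k$: that $\psi$ is a linear isomorphism.

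The heart of the matter is therefore to show $\psi$ is bijective. Here $\psi(x \otimes f) = \lambda_x \beta_f$, where $\lambda_x$ is left multiplication in $H$ and $\beta_f(y) = f(y_2) y_1$ is the natural action of $H^*$ on $H = H^{[i,i]}$ ($i$ odd). I claim $\psi$ is precisely the representation of the Heisenberg double $H \rtimes H^* = H^{[i,i+1]}$ on $H$. The one computation this requires is the commutation relation
\[
\beta_f \lambda_x = f_1(x_2)\,\lambda_{x_1}\beta_{f_2},
\]
obtained by expanding $\beta_f(xy) = f(x_2 y_2) x_1 y_1 = f_1(x_2) f_2(y_2) x_1 y_1$. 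Granting it, a short check against the crossed-product multiplication $(x \rtimes f)(x' \rtimes f') = x\beta_{f_1}(x') \rtimes f_2 f'$ shows that $\rho(x \rtimes f) := \lambda_x \beta_f$ is an algebra homomorphism $H \rtimes H^* \rightarrow \operatorname{End}(H)$ coinciding with $\psi$ as a linear map.

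Finally, as recorded in Section 2, the Heisenberg double $H \rtimes H^*$ is isomorphic to the matrix algebra $M_{\dim H}(k)$, hence is simple. Since $\rho(1 \rtimes \epsilon) = \mathrm{id}_H \neq 0$, the kernel of $\rho$ is a proper two-sided ideal and so vanishes, whence $\psi = \rho$ is injective; as $\dim(H \rtimes H^*) = (\dim H)^2 = \dim \operatorname{End}(H)$, it is an isomorphism. This yields bijectivity of $\psi$, hence of $\Phi$, which is the lemma.

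The one genuinely computational step — and the place where conventions (the order of $\lambda$ and $\beta$, and the Sweedler notation for the coproduct of $H^*$) must be handled carefully — is verifying the commutation relation and that $\rho$ respects multiplication. Everything else is formal: the reduction $\Phi = J \circ (\mathrm{id}_A \otimes \psi)$ and the passage from the algebra homomorphism $\rho$ into a simple matrix algebra to an isomorphism by a dimension count. I would expect no difficulty beyond bookkeeping once the correct Heisenberg-double convention (matching the paper's $H^{[i,i+1]}$) is fixed.
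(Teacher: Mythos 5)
Your proof is correct, but it takes a genuinely different route from the paper's. The paper argues by explicit construction: writing the given map as $z \mapsto a \otimes g(z)y$ for some $a \otimes y \otimes g \in A \otimes H \otimes H^*$, it exhibits the required element directly as $a \otimes (gp_2)(h_2)\, yS^{-1}h_1 \otimes S^{-1}p_1$, where $h$ and $p$ are left integrals of $H$ and $H^*$ with $p(h)=1$, and verifies the identity $\lambda_{a\rtimes x}\beta_f(z) = a \otimes g(z)y$ using the integral identity recalled before Lemma \ref{commlemm}. You instead factor the parametrizing map as $J \circ (\mathrm{id}_A \otimes \psi)$, reduce to the case $A = k$, and identify $\psi$ with the representation of the Heisenberg double $H \rtimes H^*$ on $H$, concluding by simplicity plus a dimension count. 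Your reduction is sound (it uses only that $H$ is finite-dimensional, so that $A \otimes \mathrm{End}(H) \cong \mathrm{Hom}_k(H, A \rtimes H)$), your commutation relation $\beta_f\lambda_x = f_1(x_2)\lambda_{x_1}\beta_{f_2}$ is consistent with the paper's multiplication in $H^{[i,i+1]}$ for $i$ odd, and the appeal to the fact that the Heisenberg double is a matrix algebra of size $\dim(H)$ is legitimate since the paper records that fact in Section 2. The trade-off: the paper's proof is self-contained modulo the existence of dual integrals and produces an explicit inverse formula (very much in the spirit of its closing remark about computing inverses), whereas yours relocates the hard content into the cited matrix-algebra statement --- whose standard proof consists precisely in showing that the representation $\rho$ you construct is faithful, typically again via integrals --- so your argument is cleaner but not computation-free at its foundations. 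As a small bonus, your version establishes that the parametrization is bijective, slightly more than the stated surjectivity.
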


\begin{proof} Clearly, any such linear map is necessarily of the form
$z \mapsto a \otimes g(z)y$ for some $a \otimes y \otimes g \in A \otimes H \otimes H^*$.
Let $p$ be a left integral for $H^*$ and $h$ a left integral of $H$ with $p(h)=1$.
Let $a \otimes x \otimes f = a \otimes (gp_2)(h_2) yS^{-1}h_1 \otimes S^{-1}p_1$. Now, computation, using the properties of left integrals stated above Lemma \ref{commlemm} applied to both $h$ and $p$, shows that
$\lambda_{a \rtimes x}\beta_f(z) = a \otimes g(z)y$, as desired.
\end{proof}

\begin{theorem}\label{crp}
Suppose that $\alpha$ is an action of the finite-dimensional Hopf algebra $H$ on an algebra $A$ and
$B = A \rtimes H$. Let $B \subseteq C$ be the basic construction of $A \subseteq B$. Then, 
\begin{enumerate}
\item $C$ is isomorphic as an algebra over $B$ to $B \rtimes H^*$.
\end{enumerate}
%
%
If, further, the action $\alpha$ is outer, then 
\begin{enumerate}
\item[(2)]  $A^\prime \cap C ~( = End(_AB_A))  = H^*$ for the natural imbedding of $H^*$ in $C$, and
\item[(3)] $Hom(_AB_A,_AA_A)$ is 1-dimensional
and is identified in $H^*$ as the scalar multiples of a(ny) non-zero left
integral of $H^*$.
\end{enumerate}
\end{theorem}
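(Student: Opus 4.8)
The plan for part (1) is to apply the recognition criterion of Lemma \ref{simple} to the algebra $C=End(B_A)$, equipped with its two subalgebras $\lambda(B)$, the image of the left regular representation $\lambda:B\to C$, and $\beta(H^*)$, the image of the natural action $\beta$ of $H^*$ on $B=A\rtimes H$. First I would check that $\lambda$ is an injective algebra map and that each $\beta_f$ is genuinely right $A$-linear, so that $\beta$ lands in $C$; a short computation using $\Delta(1)=1\otimes1$ and coassociativity handles both, while injectivity of $\beta$ follows on applying the counit. Thus $\lambda(B)\cong B$ and $\beta(H^*)\cong H^*$ are subalgebras of $C$ with $H^*$ a Hopf algebra, and it remains to verify the two hypotheses. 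For hypothesis (ii) I would establish the covariance relation $\beta_f\lambda_b=\lambda_{\beta_{f_1}(b)}\beta_{f_2}$ by a direct computation from the multiplication of $A\rtimes H$ together with the duality $\langle\Delta f,x\otimes z\rangle=f(xz)$; substituting it into $\beta_{f_1}\lambda_b\beta_{Sf_2}$ and applying the antipode axiom of $H^*$ collapses this to $\lambda_{\beta_f(b)}\in\lambda(B)$, which simultaneously verifies (ii) and shows that the action produced by Lemma \ref{simple} is $\beta$ itself, so the conclusion reads $C\cong B\rtimes_\beta H^*$ over $B$.

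The main obstacle is hypothesis (i): that the multiplication map $\Gamma:B\otimes H^*\to C$, $b\otimes f\mapsto\lambda_b\beta_f$, is a linear isomorphism. The structural fact I would use is that $B$ is free as a right $A$-module on $\{1\rtimes x_i\}$ for a basis $\{x_i\}$ of $H$; equivalently the multiplication $H\otimes A\to B$, $x\otimes a\mapsto\alpha_{x_1}(a)\rtimes x_2$, is bijective, with inverse $b\rtimes y\mapsto y_2\otimes\alpha_{S^{-1}y_1}(b)$, the verification resting on the identity $\sum y_2S^{-1}(y_1)\otimes y_3=1\otimes y$. It follows that a right $A$-linear endomorphism of $B$ is determined by its restriction to $H=1\rtimes H$, so restriction is an isomorphism $C\cong Hom(H,B)$. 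Now Lemma \ref{difficult} is exactly the statement that every element of $Hom(H,B)$ has the form $(\lambda_{a\rtimes x}\beta_f)|_H$, which gives surjectivity of $\Gamma$; injectivity I would reduce to the finite-dimensional case by observing that, through the identification $Hom(H,B)=A\otimes(H\otimes H^*)$, the map $\Gamma$ takes the form $id_A\otimes\gamma$ for a map $\gamma$ of finite-dimensional spaces, which is bijective by dimension count once known surjective. This establishes (i) and hence part (1).

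For part (2) I would compute inside $C\cong B\rtimes H^*$. The point is that $\beta_f$ acts on $A=A\rtimes1_H\subseteq B$ by the scalar $\epsilon_{H^*}(f)=f(1_H)$, because $1_H$ is grouplike. Writing an element of $A'\cap C$ as $\sum_j b_j\rtimes f_j$ with the $f_j$ linearly independent, commutation with an arbitrary $a\in A$ reduces, using $\sum\epsilon_{H^*}(f_1)f_2=f$, to $ab_j=b_ja$ for every $j$. Outerness of $\alpha$ means precisely that $A\subseteq B$ is irreducible, i.e. $A'\cap B=k1_B$, so each $b_j\in k1_B$ and the element lies in $\beta(H^*)$. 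Thus $A'\cap C=\beta(H^*)\cong H^*$; and since $C=End(B_A)$, this relative commutant is exactly the bimodule endomorphisms $End({}_AB_A)$.

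Finally, for part (3) I would view $Hom({}_AB_A,{}_AA_A)$ inside $End({}_AB_A)=H^*$ by post-composing a bimodule map $B\to A$ with the inclusion $A\hookrightarrow B$; its image consists of those $\beta_f$ carrying $B$ into $A$. Since $\beta_f(b\rtimes y)=b\rtimes\big(\sum f(y_2)y_1\big)$, this happens for all $b,y$ precisely when $\sum f(y_2)y_1\in k1_H$ for every $y$; applying $\epsilon$ identifies the scalar as $f(y)$, so the condition becomes $\sum f(y_2)y_1=f(y)1_H$. Pairing with an arbitrary $q\in H^*$ rewrites this as $qf=\epsilon_{H^*}(q)f$, that is, $f$ is a left integral of $H^*$. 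As the space of left integrals is one-dimensional and a nonzero left integral $p$ does yield the bimodule map $b\rtimes y\mapsto p(y)b$, we conclude $Hom({}_AB_A,{}_AA_A)=kp$, which is the assertion of (3).
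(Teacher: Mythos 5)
Your proposal is correct and follows essentially the same route as the paper: surjectivity of the multiplication map $B \otimes H^* \to C$ rests on the same two facts (freeness of $B$ as a right $A$-module via the isomorphism $H \otimes A \to A \rtimes H$, and Lemma \ref{difficult}), and your arguments for (2) and (3) coincide with the paper's. The only deviations are in packaging: you route part (1) through Lemma \ref{simple} by verifying the covariance relation $\beta_f\lambda_b = \lambda_{\beta_{f_1}(b)}\beta_{f_2}$, where the paper directly checks that $b \rtimes f \mapsto \lambda_b\beta_f$ is an algebra map, and you replace the paper's explicit injectivity computation (via the invertible map $k \otimes z \mapsto k_2(z_1)k_1 \otimes z_2$) with a dimension count on $\mathrm{id}_A \otimes \gamma$ --- both are valid.
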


\begin{proof} (1) 
Define a map $\theta : B \rtimes H^* \rightarrow C$ by
$\theta(b \rtimes f) = \lambda_b \circ \beta_f$ and note that this a well-defined map, i.e., is right $A$-linear, and, after a little calculation, is an
algebra homomorphism that restricts to the identity on $B$.

To see that $\theta$ is injective, take $Z = b\rtimes y \rtimes g \in B \rtimes H^* = A \rtimes H \rtimes H^*$ in $ker(\theta)$. To see that $Z=0$, it will suffice to see that  for arbitrary $f \in H^*$ and $x \in H$, $(id \otimes f \otimes x)(Z) = 0$.  Computation shows that $0 = \theta(Z)(1 \rtimes z) =
g(z_2)(b \rtimes yz_1)$, for all $z \in H$. Hence, for all $k \in H^*$ and $z \in H$, $g(z_2)k(yz_1)b = (id \otimes k_2(z_1)k_1 \otimes z_2)(Z) = 0$.
Now appeal to the well-known (and easily checked) fact that the map $k \otimes z \mapsto k_2(z_1) k_1 \otimes z_2$ of $H^* \otimes H$ to itself is invertible (with inverse $f \otimes x \mapsto f_2(S^{-1}x_1) f_1 \otimes x_2$) to produce $k \otimes z$ such that $k_2(z_1) k_1 \otimes z_2 = f \otimes x$, to finish the proof of injectivity.  

For surjectivity, first note that the map $x \otimes a \mapsto xa = (1 \rtimes x)(a \rtimes 1) = \alpha_{x_1}(a) \rtimes x_2 : H \otimes A \rightarrow A \rtimes H$ is a linear isomorphism with inverse given by $a \rtimes x \mapsto x_2 \otimes \alpha_{S^{-1}x_1}(a)$. It follows that any right $A$-linear map from $B$
to $B$ is determined by its action on elements of $H$. Now by Lemma \ref{difficult},
an arbitrary linear map from $H$ to $A \rtimes H$ can be expressed in the
form $\lambda_{a \rtimes x} \beta_f$ for $a \otimes x \otimes f \in A \otimes H \otimes H^*$. Since $\lambda_{a \rtimes x} \beta_f$ is right $A$-linear, surjectivity follows.

(2) Identify $C$ with $A \rtimes H \rtimes H^*$. Observe first that $1 \rtimes 1 \rtimes f$ commutes with $A$ for all $f \in H^*$. Conversely, suppose that $a \rtimes x \rtimes f \in A^\prime \cap C$. This implies that
$\tilde{a}a \rtimes x \rtimes f = a\alpha_{x_1}(\tilde{a}) \rtimes x_2 \rtimes f$ for all $\tilde{a} \in A$. Recalling that $a \rtimes x \rtimes f$ actually stands for a sum and comparing coefficients of a basis of $H^*$ on either side gives $\tilde{a}a \rtimes x = a\alpha_{x_1}(\tilde{a}) \rtimes x_2$ for all $\tilde{a} \in A$. This implies that $a \rtimes x \in A^\prime \cap B$ and is therefore a scalar by outerness of the action. Hence $a \rtimes x \rtimes f \in H^* \subseteq C$.

(3) $Hom(_AB_A,_AA_A)$ consists of those elements of $End(_AB_A)$ whose range is contained in $A$. Since $End(_AB_A) = \{\beta_f : f \in H^*\}$, we
need to see for what $f \in H^*$ is $\beta_f(A \rtimes H) \subseteq A$.
If $f =p$ - a non-zero left integral of $H^*$, then $\beta_f (a \rtimes x)
= a \rtimes p(x_2)x_1 = p(x)(a \rtimes 1)$, by the defining property of a left integral of $H^*$. On the other hand, if $\beta_f(A \rtimes H) \subseteq A$,
then, in particular,  $\beta_f(1 \rtimes x) = 1 \rtimes f(x_2)x_1 \in A$ for all $x \in H$. Thus $f(x_2)x_1$ must be a scalar multiple of $1_H$ for all $x \in H$ and applying $\epsilon$ shows that this scalar is necessarily $f(x)$. Thus $f$ must be a left integral of $H^*$.
\end{proof}

We omit the proof of the next proposition, the first three parts of which follow directly from Lemma \ref{relcomm} and Theorem \ref{crp}, while the fourth has a proof very similar to that of Theorem \ref{crp}(2).

\begin{proposition}\label{dbc}
Suppose that $\alpha$ is an outer action of the finite-dimensional Hopf algebra $H$ on an algebra $A$ and
$B = A \rtimes H$. Let $A \subseteq B \subseteq C \subseteq D$ be the iterated basic construction of $A \subseteq B$. Then, 
\begin{enumerate}
\item[(1)] $D$ is isomorphic as an algebra over $A$ to $A \rtimes H \rtimes H^* \rtimes H$,
\item[(2)] $B^\prime \cap D ~(= End(_BC_B)) = H$ for the natural imbedding of $H$ in $D$,
\item[(3)] $Hom(_BC_B,_BB_B)$ is 1-dimensional
and is identified in $H$ as the scalar multiples of a(ny) non-zero left
integral of $H$, and
\item[(4)]  $A^\prime \cap D ~( = End(_AC_B))  = H^* \rtimes H$ for the natural imbedding of $H^* \rtimes H$ in $D$. \qed
\end{enumerate}
\end{proposition}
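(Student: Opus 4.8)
The statement to prove is Proposition \ref{dbc}, which analyses the iterated basic construction $A \subseteq B \subseteq C \subseteq D$ when $\alpha$ is an outer action of $H$ on $A$ and $B = A \rtimes H$. The plan is to bootstrap from Theorem \ref{crp} by applying the basic construction one more time, keeping careful track of which Hopf algebra acts at each stage.

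For part (1), I would proceed by two applications of Theorem \ref{crp}(1). That theorem already gives $C \cong B \rtimes H^* = A \rtimes H \rtimes H^*$ as an algebra over $B$. To identify $D = End(C_B)$, I would like to reapply the same theorem to the inclusion $B \subseteq C$. The key point is that this inclusion is itself of crossed-product form: $C = B \rtimes H^*$ for the natural action $\beta$ of $H^*$ on $B = A \rtimes H$. So Theorem \ref{crp}(1), applied with the Hopf algebra $H^*$ in place of $H$ (and using $(H^*)^* = H$), yields $D \cong C \rtimes (H^*)^* = C \rtimes H = A \rtimes H \rtimes H^* \rtimes H$ as an algebra over $C$, hence over $A$. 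The one thing requiring care here is the hypothesis of outerness needed to invoke the cleaner conclusions: to apply Theorem \ref{crp} to $B \subseteq C$ I must know that the action $\beta$ of $H^*$ on $B$ is outer, equivalently that $B \subseteq C$ is irreducible. This follows from Lemma \ref{relcomm}: since $\alpha$ is outer, $A \subseteq B$ is irreducible, so the basic construction $B \subseteq C$ is irreducible as well, i.e. $\beta$ is outer.

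Parts (2) and (3) are then immediate transcriptions of Theorem \ref{crp}(2) and (3) applied to $B \subseteq C \subseteq D$, with the roles of $H$ and $H^*$ swapped. Concretely, $B^\prime \cap D = End({}_B C_B)$ equals $(H^*)^* = H$ by Theorem \ref{crp}(2) (with $H^*$ acting on $B$), and $Hom({}_B C_B, {}_B B_B)$ is one-dimensional, spanned by a left integral of $(H^*)^* = H$, by Theorem \ref{crp}(3). These require no new ideas beyond the index shift and the identification of integrals of $H$ with integrals of $(H^*)^*$.

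The genuinely new content, and the part I expect to be the main obstacle, is part (4): the computation of $A^\prime \cap D = End({}_A C_B)$. This is not a single-step basic-construction statement but an assertion about the full relative commutant of $A$ inside the doubly-iterated algebra, and it does not follow formally from Lemma \ref{relcomm} alone. The text signals that its proof mirrors that of Theorem \ref{crp}(2), so the plan is to imitate that argument in the larger algebra. I would write a general element of $D \cong A \rtimes H \rtimes H^* \rtimes H$ as $a \rtimes x \rtimes f \rtimes y$, impose commutation with an arbitrary $\tilde a \in A$, and compare coefficients of a basis of $H^* \rtimes H$ (the components that $A$ does not interact with under the action) to reduce to the condition $\tilde a a \rtimes x = a\alpha_{x_1}(\tilde a) \rtimes x_2$ for all $\tilde a \in A$. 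Outerness of $\alpha$ then forces $a \rtimes x \in A^\prime \cap B = k 1_B$, leaving exactly the elements $1 \rtimes 1 \rtimes f \rtimes y$, which is the copy of $H^* \rtimes H$ in $D$. The delicate point is verifying that the elements $f$ and $y$ in the third and fourth tensor slots are genuinely free and that the basis-comparison argument isolates the $A \rtimes H$ part cleanly; once the multiplication rule is used to see that $A$ commutes past the $H^*$ and $H$ factors only through its interaction with the adjacent $H$ slot, the reduction goes through exactly as in Theorem \ref{crp}(2).
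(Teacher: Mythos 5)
Your proposal is correct and follows exactly the route the paper intends (the paper omits the proof but states that parts (1)--(3) follow from Lemma \ref{relcomm} and Theorem \ref{crp}, and that part (4) is proved like Theorem \ref{crp}(2)): you use Lemma \ref{relcomm} to transfer irreducibility to $B \subseteq C$, reapply Theorem \ref{crp} with $H^*$ in place of $H$, and for (4) repeat the coefficient-comparison argument in $A \rtimes H \rtimes H^* \rtimes H$ to reduce to $A' \cap B = k1_B$. No gaps.
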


%
%

Note that Proposition \ref{dbc}(4) implies that the multiplication map
$(A^\prime \cap C) \otimes (B^\prime \cap D) \rightarrow (A^\prime \cap D)$ is an isomorphism.
We now show that the crossed product by an outer action of a finite dimensional Hopf algebra recognizes the Hopf algebra. More precisely, we have the following theorem.

\begin{theorem}\label{unique}
Let $H$ be a finite dimensional Hopf algebra acting outerly on an algebra $A$. Then, the isomorphism class of the pair $A \subseteq A \rtimes H$ 
determines $H$ up to isomorphism, i.e., if $A \subseteq A \rtimes H \cong 
A \subseteq A \rtimes K$ as pairs of algebras, for some finite dimensional Hopf algebra $K$ acting outerly on $A$, then $H \cong K$ as Hopf algebras.
\end{theorem}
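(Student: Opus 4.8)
The plan is to extract $H$ from the pair $A \subseteq B = A \rtimes H$ by a procedure that refers only to isomorphism-invariant data, so that an isomorphism of pairs automatically transports the whole construction. Concretely, I would first form the iterated basic construction $A \subseteq B \subseteq C \subseteq D$, which is canonically attached to the inclusion $A \subseteq B$: an isomorphism of pairs $A \subseteq B$ extends to an isomorphism of the entire tower by conjugation, since it carries $A$ onto $A$ and hence preserves right $A$-linearity. By Theorem \ref{crp}(2) and Proposition \ref{dbc}(2) the relative commutants $A^\prime \cap C$ and $B^\prime \cap D$ are, as algebras, copies of $H^*$ and of $H$ respectively, while the remark following Proposition \ref{dbc} shows that multiplication in $D$ gives a linear isomorphism $(A^\prime \cap C) \otimes (B^\prime \cap D) \xrightarrow{\sim} A^\prime \cap D$; that is, $A^\prime \cap D$ is the Heisenberg double, factored by the two subalgebras just named. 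Thus the algebra structures of $H$ and of $H^*$ are recovered canonically, and it remains to recover the coalgebra (equivalently, the Hopf) structure.

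The essential point is to recover the dual pairing $\langle \cdot , \cdot \rangle : (A^\prime \cap C) \otimes (B^\prime \cap D) \to k$ canonically. First I would recover the counits. By Proposition \ref{dbc}(3) the space $Hom(_BC_B,_BB_B)$ is a canonical line $kh$ inside $H = B^\prime \cap D$ consisting of left integrals, and by Theorem \ref{crp}(3) the space $Hom(_AB_A,_AA_A)$ is a canonical line $kp$ inside $H^* = A^\prime \cap C$ consisting of left integrals. The defining property $xh = \epsilon_H(x)h$ of a left integral then lets me read off $\epsilon_H$ as the eigenvalue of left multiplication by $x$ on the line $kh$, and similarly $\epsilon_{H^*}$ from $gp = \epsilon_{H^*}(g)p$; both counits are therefore canonical. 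Next, using the factorization of $A^\prime \cap D$, each product $xg$ with $x \in B^\prime \cap D$ and $g \in A^\prime \cap C$ can be rewritten uniquely in the form $\sum g_i^\prime x_i^\prime$, defining a straightening map $\sigma : (B^\prime \cap D) \otimes (A^\prime \cap C) \to (A^\prime \cap C) \otimes (B^\prime \cap D)$. Unwinding the Heisenberg multiplication $(1 \rtimes x)(g \rtimes 1) = \alpha_{x_1}(g) \rtimes x_2$ shows that $\sigma$ entangles the coproduct of $H$ with the natural action of $H$ on $H^*$, and a short computation gives $\langle g , x \rangle = (\epsilon_{H^*} \otimes \epsilon_H)\, \sigma(x \otimes g)$, recovering the pairing.

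With the pairing in hand the reconstruction is routine. Since the pairing between $A^\prime \cap C$ and $B^\prime \cap D$ is perfect, the comultiplication of $H = B^\prime \cap D$ is defined as the dual of the already-recovered multiplication of $A^\prime \cap C = H^*$, namely by $\langle gg^\prime , x \rangle = \sum \langle g , x_1 \rangle \langle g^\prime , x_2 \rangle$; together with the unit $1_{B^\prime \cap D}$, the counit $\epsilon_H$, and the antipode forced by the Hopf axioms, this endows $B^\prime \cap D$ with a Hopf algebra structure built entirely from isomorphism-invariant data. Writing $\Phi(A \subseteq B)$ for this Hopf algebra, the construction is manifestly invariant under isomorphisms of pairs. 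I would finish by checking, in the model $A \subseteq A \rtimes H$, that $\Phi(A \subseteq A \rtimes H) \cong H$ as Hopf algebras; this amounts to tracking the canonical identifications of Theorem \ref{crp} and Proposition \ref{dbc} and matching the formulas above against the genuine pairing and coproduct of $H$. Granting this, if $A \subseteq A \rtimes H \cong A \subseteq A \rtimes K$ as pairs, then $H \cong \Phi(A \subseteq A \rtimes H) \cong \Phi(A \subseteq A \rtimes K) \cong K$ as Hopf algebras.

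The main obstacle is the final computational step: disentangling the coproduct from the Heisenberg straightening map $\sigma$ and verifying, with consistent conventions throughout, that the recovered pairing is the genuine perfect Hopf pairing and that the reconstructed comultiplication, counit and antipode reproduce those of $H$. The counit recovery via integrals (relying on Theorem \ref{crp}(3) and Proposition \ref{dbc}(3)) and the identification of the natural action $\alpha$ implicit in $\sigma$ are the delicate parts; everything else is bookkeeping with the canonical identifications.
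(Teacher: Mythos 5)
Your proposal is correct and follows essentially the same route as the paper: form the tower $A \subseteq B \subseteq C \subseteq D$, recover $H^*$ and $H$ as the relative commutants $A^\prime \cap C$ and $B^\prime \cap D$, read off the counits from the distinguished lines of integrals given by $Hom(_AB_A,{}_AA_A)$ and $Hom(_BC_B,{}_BB_B)$, and recover the evaluation pairing by straightening the product $xf$ in $A^\prime \cap D \cong (A^\prime \cap C) \otimes (B^\prime \cap D)$ and applying $\epsilon_{H^*} \otimes \epsilon_H$. The paper carries out your final "check in the model" step directly by computing $xf = f_2(x_1) f_1 \rtimes x_2$ in $H^* \rtimes H$, but this is the same argument.
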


Before beginning the proof we note that by an isomorphism of pairs of algebras $A \subseteq B$ and $C \subseteq D$, we mean an algebra
isomorphism from $B$ to $D$ that restricts to an isomorphism from $A$ to $C$.

\begin{proof}[Proof of Theorem \ref{unique}] Begin with a pair of algebras $A \subseteq B$ known to be isomorphic to $A \subseteq A \rtimes H$. Perform the double basic construction to get the tower $A \subseteq B \subseteq C \subseteq D$
of algebras. It follows from Theorem \ref{crp}(2) and Theorem \ref{dbc}(2) that $A^\prime \cap C \cong H^*$ and $B^\prime \cap D \cong  H$ as algebras.

Now, Theorem \ref{crp}(3) and Proposition \ref{dbc}(3) give distinguished 1-dimensional subspaces $Hom(_AB_A,_AA_A) \subseteq A^\prime \cap C$ and
$Hom(_BC_B,_BB_B) \subseteq B^\prime \cap D$ that are identified with
the spaces of left integrals in $H^*$ and $H$ respectively.

Pick a non-zero element $p \in Hom(_AB_A,_AA_A)$. Since $p$ corresponds to a left-integral of $H^*$, for any $g \in H^* = A^\prime \cap C$, we
have $gp = g(1)p$. Thus we get the map $g \mapsto g(1): A^\prime \cap C = H^* \rightarrow k$ - which is the counit $\epsilon_{H^*}$ of $H^*$. Similarly, we get the map $\epsilon_H: B^\prime \cap D = H \rightarrow k$.

Finally, given arbitrary $f \in H^* = A^\prime \cap C$ and $x \in H = B^\prime \cap D$, consider $xf \in A^\prime \cap D$. Identifying $H^*$ and $H$ with their images in $H^* \rtimes H$, this is just the element
$(\epsilon \rtimes x)(f \rtimes 1) = \alpha_{x_1}(f) \rtimes x_2 = f_2(x_1)f_1 \rtimes x_2 \in H^* \rtimes H$. Pulling back this element via the natural isomorphism from 
$(A^\prime \cap C) \otimes (B^\prime \cap D)$ to $(A^\prime \cap D)$
gives the element $\alpha_{x_1}(f) \otimes x_2  \in H^* \otimes H$. Now applying $\epsilon_{H^*} \otimes \epsilon_H$ to this gives $f(x)$.

Thus, if $A \subseteq A \rtimes H \cong A \subseteq A \rtimes K$, we've seen that there are algebra isomorphisms $H \rightarrow K$ and
$H^* \rightarrow K^*$ that take the evaluation pairing between $H$ and $H^*$ to that between $K$ and $K^*$. This shows that the algebra
isomorphisms are bialgebra isomorphisms and therefore also Hopf algebra isomorphisms.\end{proof}

%

\section{The main theorem}

We are now ready to state our main result.

\begin{theorem}\label{main}
Let $H$ be a finite-dimensional Hopf algebra and $A \subseteq B$ be its derived pair. Then $B$ is isomorphic, as an algebra over $A$,  to $A \rtimes L$ (for some (outer) action of $L = D(H)^{cop}$ on $A$) and further, up to isomorphism, $L$ is the only finite-dimensional Hopf algebra with this property.
\end{theorem}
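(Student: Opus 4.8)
The plan is to prove the theorem in two halves: first the existence statement that $B \cong A \rtimes L$ as an algebra over $A$, and then the uniqueness statement that $L$ is the \emph{only} finite-dimensional Hopf algebra with this property. For the existence half, I would apply Lemma \ref{simple} to the inclusion $A \subseteq B$ with the subalgebra playing the role of ``$H$'' taken to be $H^{[0,1]} = H^* \rtimes H$, which sits inside $B$ as the factors in positions $0$ and $1$. To do this I must (a) verify that the multiplication map $A \otimes H^{[0,1]} \to B$ is a linear isomorphism, which is essentially the definition of $B$ as the iterated crossed product reordered via the commutation relations $H^i H^j = H^j H^i$ for $|i-j| \geq 2$; (b) equip $H^{[0,1]} = H^* \otimes H$ with a Hopf algebra structure and check that this structure is exactly $L = \tilde{D}(H)^{cop}$; and (c) verify condition (ii) of Lemma \ref{simple}, namely that $x_1 a S x_2 \in A$ for all $x \in H^{[0,1]}$ and $a \in A$. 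The identification in (b) is where the Drinfeld double genuinely enters: the multiplication on $H^{[0,1]}$ inherited from $B$ must be matched against the product formula for $\tilde{D}(H)^{cop}$ displayed in Section 3, and the comultiplication and antipode must be read off so that condition (ii) holds — so (b) and (c) are really to be done together, choosing the coproduct and antipode on $H^* \otimes H$ precisely so that the conjugation action $x \mapsto (a \mapsto x_1 a S x_2)$ lands in $A$ and reproduces the crossed product.

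For condition (c) the key computational input is the commutation behaviour of the generators $H^0 = H^*$ and $H^1 = H$ against the tail algebras $H^{(-\infty,-1]}$ and $H^{[2,\infty)}$ that generate $A$. Since $H^0$ commutes with everything in positions $\leq -2$ and $\geq 2$ except $H^{-1}$ and $H^2$ respectively, and similarly for $H^1$, the only nontrivial conjugations to control are those of $H^*$ (position $0$) against $H^{-1} = H$ and of $H$ (position $1$) against $H^2 = H^*$; these are governed by the natural crossed-product actions $\beta$ recalled in Section 2 and by the explicit multiplication formula of Figure \ref{fig:multfig}. I would check that the resulting map $\alpha_x(a) = x_1 a S x_2$, with the comultiplication and antipode of $\tilde D(H)^{cop}$, does preserve $A$, and by Lemma \ref{simple} conclude $B \cong A \rtimes_\alpha L$ over $A$. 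Outerness of this action then follows immediately, since Proposition \ref{commutants} already gives that the derived pair $A \subseteq B$ is irreducible, and irreducibility of $A \subseteq A \rtimes L$ is exactly outerness of the action by definition.

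For the uniqueness half, the idea is to invoke Theorem \ref{unique}: if $B \cong A \rtimes K$ over $A$ for some finite-dimensional Hopf algebra $K$ acting outerly, then since also $B \cong A \rtimes L$ over $A$, the two pairs $A \subseteq A \rtimes L$ and $A \subseteq A \rtimes K$ are isomorphic as pairs of algebras, whence Theorem \ref{unique} forces $L \cong K$ as Hopf algebras. The only subtlety is that Theorem \ref{unique} requires the given action of $L$ to itself be outer, which we have just established, and requires both actions to be on the \emph{same} algebra $A$ with isomorphisms of pairs restricting to the identity (or at least to an isomorphism) on $A$; these hypotheses are met because both isomorphisms are over $A$. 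So the uniqueness step is essentially a clean citation once existence and outerness are in hand.

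The main obstacle I expect is step (b)--(c): correctly identifying the Hopf structure on $H^{[0,1]} = H^* \otimes H$ induced from its position inside $B$, and matching it on the nose with $\tilde{D}(H)^{cop}$ rather than with $D(H)$, $\tilde D(H)$, or some other twist. The whole point of the \texttt{cop} and of passing from $D(H)$ to $\tilde{D}(H)$ via $S \otimes S^{-1}$ in Section 3 is precisely to make the conjugation action $x_1 a S x_2$ come out right and reproduce the crossed product multiplication of $B$; getting the antipodes and the order of legs to align — so that the contraction and multiplication dots in Figure \ref{fig:multfig} reproduce the product $g_1(x_1)g_3(Sx_3)(fg_2 \otimes yx_2)$ — is the genuinely delicate bookkeeping, and is the reason the authors introduced the particular avatar $L = \tilde D(H)^{cop}$ in advance. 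Everything else reduces to applying the already-proved Lemmas \ref{simple}, the commutation facts, Proposition \ref{commutants}, and Theorem \ref{unique}.
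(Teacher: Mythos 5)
Your overall architecture matches the paper's --- realize $L$ as a subalgebra of $B$, verify the two hypotheses of Lemma \ref{simple}, get outerness from Proposition \ref{commutants}, and get uniqueness from Theorem \ref{unique} (that last step you handle correctly and it is exactly what the paper does). But the existence half has a fatal flaw in the choice of subalgebra: you take the copy of $L$ inside $B$ to be $H^{[0,1]} = H^* \rtimes H$. That algebra is the Heisenberg double of $H^*$, which (as the paper notes in \S 2) is a matrix algebra of size $\dim(H)$. For $\dim H > 1$ a matrix algebra is simple and admits no algebra homomorphism onto $k$, hence no counit and no Hopf algebra structure whatsoever; and it is not isomorphic as an algebra to $D(H)^{cop}$, which is generally far from simple (e.g.\ for a group algebra). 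So your steps (b)--(c) are not ``delicate bookkeeping'' that can be made to come out right --- the multiplication on $H^{[0,1]}$ inherited from $B$ simply cannot be matched with the product of $\tilde D(H)^{cop}$, and Lemma \ref{simple} cannot be applied with this subalgebra.

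The paper's resolution is to embed $L$ into $H^{[0,2]} = H^* \rtimes H \rtimes H^*$ via the nontrivial formula $(f \otimes x) \mapsto f_1(Sx_1)\, f_3 \rtimes Sx_2 \rtimes f_2$ of Lemma \ref{injective}; the image straddles positions $0$, $1$ \emph{and} $2$, and the authors remark that finding this formula was the hardest part of the work. This choice also changes the nature of your step (a): since position $2$ is shared with $A$, the claim that multiplication $A \otimes L \to B$ is a linear isomorphism (Lemma \ref{mult}) is no longer ``essentially the definition of $B$'' but requires exhibiting an explicit inverse to a map of $H^* \otimes H \otimes H^* \otimes H$ to itself. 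Likewise the stability of $A$ under the adjoint action (Proposition \ref{actionona}) involves the interaction of the position-$2$ leg of $L$ with the $f^2$ and $x^3$ legs of $A$, not only the commutation of positions $0$ and $1$ against positions $-1$ and $2$ that you describe. Without the correct embedding of $L$, the existence half of your argument does not go through.
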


We briefly sketch the proof of this theorem before going into the details. We first exhibit $L = \tilde{D}(H)^{cop} \cong D(H)^{cop}$ (see \S 3) as a subalgebra of $B$,  show that the multiplication map $A \otimes L$ to $B$
is an isomorphism and that $A$ is stable under the `adjoint action' of $L$. This suffices to see that $B$ is isomorphic as an algebra over $A$ to $A \rtimes L$. The uniqueness of $L$ needs
a little more effort for which we show that from the pair
$A \subseteq A \rtimes H$ (for any outer action of a Hopf algebra $H$ on $A$) we can first recover the algebras
$H$ and $H^*$ as relative commutants and then also the
natural evaluation pairing between them, and therefore the Hopf algebra structure on $H$.


While the following lemma is quite easy to prove, deriving the form of the
homomorphism from $L$ to $B$ took us the longest time and involved
application of the diagrammatics of Jones' planar algebras. Having obtained
the formula though, verification is simple.

\begin{lemma}\label{injective}
The map $L \rightarrow B$ defined as  the composite map $L \rightarrow H^{[0,2]} \rightarrow B$ where $L \rightarrow H^{[0,2]} =  H^* \rtimes H \rtimes H^*$ is defined by
$$
(f \otimes x) \mapsto f_1(Sx_1) f_3 \rtimes Sx_2 \rtimes f_2.
$$
 is
an injective algebra homomorphism. \qed
\end{lemma}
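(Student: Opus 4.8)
The plan is to prove the statement in three steps: first that the displayed formula defines an algebra homomorphism $\phi : L \to H^{[0,2]}$, then that $\phi$ is injective, and finally to compose with the natural inclusion $H^{[0,2]} \hookrightarrow B$.

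For the homomorphism step I would verify directly, in Sweedler notation, that $\phi\big((f\otimes x)(g\otimes y)\big) = \phi(f\otimes x)\,\phi(g\otimes y)$ and that $\phi$ sends the unit $\epsilon\otimes 1$ of $L$ to the unit $\epsilon\rtimes 1\rtimes\epsilon$ of $H^{[0,2]}$. On the left one expands the product of $L=\tilde D(H)^{cop}$, namely $(f\otimes x)(g\otimes y)=g_1(x_1)g_3(Sx_3)(fg_2\otimes yx_2)$, and then applies $\phi$; on the right one expands $\phi(f\otimes x)$ and $\phi(g\otimes y)$ and multiplies them using the multiplication rule of $H^{[0,2]}=H^*\rtimes H\rtimes H^*$ specialised to $i=0$, $j=2$. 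The two sides are reconciled using that $S$ is anti-comultiplicative ($\Delta S=(S\otimes S)\tau\Delta$), together with the pairing identity $\langle f,ab\rangle=\langle f_1,a\rangle\langle f_2,b\rangle$ and the usual counit/antipode relations; for the unit one only needs $\Delta\epsilon=\epsilon\otimes\epsilon$, $\Delta 1=1\otimes 1$ and $S1=1$. This is the computational heart of the lemma, and — exactly as already noted — it is routine once the formula is in hand.

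For injectivity a clean reduction is available. Composing $\phi$ with $\mathrm{id}\otimes\mathrm{id}\otimes\epsilon : H^*\rtimes H\rtimes H^*\to H^*\otimes H$ kills the third leg and, after collapsing $\epsilon(f_2)$, yields the endomorphism $\Phi$ of $H^*\otimes H$ given by $\Phi(f\otimes x)=f_1(Sx_1)\,f_2\otimes Sx_2$. Since a composite $g\circ\phi$ being injective forces $\phi$ injective, it suffices to show $\Phi$ is injective, and as an endomorphism of a finite-dimensional space it is enough to exhibit a one-sided inverse. Writing $\Phi=P\circ(\mathrm{id}\otimes S)$ with $P(f\otimes x)=f_1(x_2)\,f_2\otimes x_1$ reduces this to the bijectivity of $P$, a standard ``hit''-type map whose inverse is $g\otimes y\mapsto g_1(S^{-1}y_2)\,g_2\otimes y_1$; checking that these are mutually inverse is a short application of the identity $b_2 S^{-1}(b_1)=\epsilon(b)1$, precisely of the kind already invoked in the proof of Theorem~\ref{crp}. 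As $\mathrm{id}\otimes S$ is a bijection, $\Phi$, and hence $\phi$, is injective. Finally, the natural inclusion $H^{[0,2]}\hookrightarrow B$ is an injective algebra map (it is a structure map of the direct limit defining $B$, and the multiplication rule shows these inclusions are algebra homomorphisms), so composing it with $\phi$ gives an injective algebra homomorphism $L\to B$ of the asserted form.

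The main obstacle is the homomorphism step: not any conceptual difficulty but the bookkeeping, since both the product of $L$ (carrying the $\mathrm{cop}$ twist and the two ``cross'' evaluations of the double) and the product of $H^{[0,2]}$ proliferate Sweedler legs, and the reconciliation depends on deploying the anti-(co)multiplicativity of $S$ and $S^{-1}$ in exactly the right slots. By contrast, the injectivity and passage-to-$B$ steps are short.
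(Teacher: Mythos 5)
Your proof is correct, and while it shares the paper's overall shape (the homomorphism verification is declared routine and omitted in both; the substance is the injectivity argument via a linear left inverse), the injectivity step is realized differently. The paper writes down an explicit left inverse $H^{[0,2]} \rightarrow L$ in one stroke, namely $f \rtimes x \rtimes g \mapsto f(1)\,g_1(S^{-1}x_2)\,(g_2 \otimes S^{-1}x_1)$ --- in effect it collapses the \emph{first} $H^*$-leg of the image by evaluation at $1$ and reconstructs $f \otimes x$ from the middle and third legs. You instead collapse the \emph{third} leg with $\mathrm{id}\otimes\mathrm{id}\otimes\epsilon$ and then factor the resulting endomorphism $f \otimes x \mapsto f_1(Sx_1)\,f_2 \otimes Sx_2$ of $H^*\otimes H$ as $P\circ(\mathrm{id}\otimes S)$, with $P(f\otimes x)=f_1(x_2)f_2\otimes x_1$ a standard invertible Galois-type map. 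I checked the details: the identification of $(\mathrm{id}\otimes\mathrm{id}\otimes\epsilon)\circ\phi$ with $\Phi$ is right (the $\epsilon(f_2)$ collapses $f_1\otimes f_2\otimes f_3$ to $f_1\otimes f_2$), the factorization through $\mathrm{id}\otimes S$ uses anti-comultiplicativity of $S$ correctly, and your claimed inverse $g\otimes y\mapsto g_1(S^{-1}y_2)\,g_2\otimes y_1$ for $P$ does compose to the identity on both sides via $y_2S^{-1}(y_1)=\epsilon(y)1$ and $S^{-1}(y_2)y_1=\epsilon(y)1$. The trade-off: the paper's version yields a quotable closed-form left inverse on all of $H^{[0,2]}$, while yours avoids having to guess that formula --- only a projection and the recognition of a standard bijection on $H^*\otimes H$ are needed --- so it is arguably the more systematic derivation, at the cost of being a composite of three maps rather than one.
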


\begin{proof} We omit the verification that the map defined is an algebra homomorphism. To see that it is injective, we consider the map $H^{[0,2]} \rightarrow L$ defined by $f \rtimes x \rtimes g \mapsto f(1)g_1(S^{-1}x_2) (g_2 \otimes S^{-1}x_1)$ and verify that it is a left inverse.
\end{proof}

\begin{remark}
In particular, Lemma \ref{injective} implies that $L$ is a subalgebra of $H^* \rtimes H \rtimes H^* \rtimes H$ which is a matrix algebra of size
$dim(H)^2$ and also is the tensor square of the Heisenberg double
$H^* \rtimes H$ of $H^*$. This is one of the results of \cite{Ksh1996}.
\end{remark}

We will identify $L$ with its image in $B$. Note that under this identification, $(f \otimes 1) \mapsto f_2 \rtimes 1 \rtimes  f_1 \in H^{[0,2]}$ and $(\epsilon \otimes x) \mapsto Sx \in H^1$.

\begin{lemma}\label{mult}
The multiplication map of $B$ restricted to $A \otimes L$ is an isomorphism.
\end{lemma}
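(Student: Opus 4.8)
The plan is to show that the multiplication map $\mu : A \otimes L \to B$ is a linear isomorphism by cutting $B$ down to finite levels, peeling off the ``negative'' part of $A$, and thereby isolating a single small finite-dimensional computation.

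First I would filter $B$ by the finite-dimensional subalgebras $F_{n,m} = H^{[-n,m]}$, so that $B = \varinjlim F_{n,m}$. Since $A \cap F_{n,m} = H^{[-n,-1]} \otimes H^{[2,m]}$ and, by Lemma \ref{injective}, $L$ lies in $H^{[0,2]}$, the map $\mu$ restricts to $\mu_{n,m} : (A\cap F_{n,m}) \otimes L \to F_{n,m}$, and $\mu = \varinjlim \mu_{n,m}$; hence it suffices to prove that each $\mu_{n,m}$ is an isomorphism, since a direct limit of isomorphisms is an isomorphism. A dimension count gives $\dim\big((A\cap F_{n,m})\otimes L\big) = \dim(H)^{n}\,\dim(H)^{m-1}\,\dim(H)^2 = \dim(H)^{n+m+1} = \dim F_{n,m}$, so at each level it is enough to prove surjectivity (equivalently, injectivity).

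Next I would use the associativity of the iterated crossed product, $H^{[-n,m]} = H^{[-n,-1]} \rtimes H^{[0,m]}$, under which multiplication on the left by an element of $H^{[-n,-1]}$ is simply the inclusion into the left tensor factor. Writing $a = a_- \otimes a_+$ with $a_- \in H^{[-n,-1]}$ and $a_+ \in H^{[2,m]}$, and noting that $a_+\ell \in H^{[0,m]}$, one gets $\mu_{n,m} = id_{H^{[-n,-1]}} \otimes \nu_m$, where $\nu_m : H^{[2,m]} \otimes L \to H^{[0,m]}$ is $a_+ \otimes \ell \mapsto a_+\ell$. Everything thus reduces to showing that $\nu_m$ is an isomorphism for all $m \ge 2$. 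Here a further reduction is available: for $m \ge 4$ the outermost factor $H^m$ of $H^{[2,m]} = H^{[2,m-1]}\rtimes H^m$ commutes with $L \subseteq H^{[0,2]}$ (the positions differ by at least two) and, inside $H^{[0,m]} = H^{[0,m-1]}\rtimes H^m$, acts trivially on $H^{[0,2]}$; a short computation then gives $\nu_m = \nu_{m-1} \otimes id_{H^m}$ after the obvious reshuffle of tensor factors, so by downward induction it suffices to treat the base cases $m=2$ and $m=3$.

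The base cases are finite-dimensional and would be checked by explicit computation, exhibiting an inverse in the spirit of the left inverse used in the proof of Lemma \ref{injective}. The essential input is the bijectivity of the Galois-type maps on $H^*\otimes H$ and $H^*\otimes H^*$ already invoked in the paper --- for instance $k\otimes z \mapsto k_2(z_1)k_1 \otimes z_2$ and the Fourier transform $f \mapsto f(h_2)h_1$ --- applied to the explicit embedding formula $(f\otimes x) \mapsto f_1(Sx_1)f_3 \rtimes Sx_2 \rtimes f_2$ together with the special values $(\epsilon\otimes x)\mapsto Sx$ and $(f\otimes 1)\mapsto f_2\rtimes 1\rtimes f_1$ recorded after Lemma \ref{injective}. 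I expect the main obstacle to lie exactly here: because $A$ and $L$ overlap in position $2$ and positions $1,2,3$ do not commute, surjectivity of $\nu_2$ and $\nu_3$ is not formal and must be teased out of these formulas via the above bijections. The earlier reductions are designed precisely to reduce the whole infinite-dimensional statement to this one genuine computation.
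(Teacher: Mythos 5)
Your reductions are all sound, and they reproduce, in more elaborate form, exactly what the paper's proof does implicitly: the paper multiplies a general element $\cdots \rtimes x^{-1} \rtimes \epsilon \rtimes 1 \rtimes f^2 \rtimes x^3 \rtimes \cdots$ of $A$ by $f_1(Sx_1)f_3 \rtimes Sx_2 \rtimes f_2 \in H^{[0,2]}$ and observes that every tensor slot outside positions $0$ through $3$ passes through untouched --- which is your peeling off of $H^{[-n,-1]}$ and of $H^m$ for $m \geq 4$, combined. After that, both arguments come down to one and the same finite-dimensional statement: that your $\nu_3$, written in coordinates as the map
$$
g \otimes y \otimes f \otimes x \mapsto f_1(Sx_1)\,g_1(Sx_2)\,f_3(y_1)\,( f_4 \otimes Sx_3 \otimes g_2f_2 \otimes y_2)
$$
of $H^* \otimes H \otimes H^* \otimes H$ to itself, is bijective. (Your $m=2$ base case is redundant: the intervals with $m \geq 3$ are cofinal, and in any case $\nu_2$ is a restriction of $\nu_3$ landing in $H^{[0,2]} \rtimes 1$.)

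The genuine gap is that you never prove this bijectivity. You explicitly flag it as ``the main obstacle'' and say it ``would be checked by explicit computation,'' but that computation \emph{is} the lemma --- everything preceding it is bookkeeping. The paper discharges it by exhibiting the explicit inverse
$$
p \otimes z \otimes q \otimes w \mapsto p_1(Sw_1)\,q_1(S^{-1}z_2)\,(q_2Sp_2 \otimes w_2 \otimes p_3 \otimes S^{-1}z_1),
$$
and nothing in your write-up substitutes for this: the map above is not in any evident way a composite of the Fourier transform and the Galois-type bijection $k \otimes z \mapsto k_2(z_1)k_1 \otimes z_2$ that you invoke, so its invertibility does not follow formally from theirs. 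Your dimension count is a genuinely useful addition --- it halves the burden, since one need only prove injectivity or surjectivity rather than produce a two-sided inverse --- but one of those still has to be proved, and it is not.
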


\begin{proof}
Given $\cdots \rtimes x^{-1} \rtimes \epsilon \rtimes 1 \rtimes f^2\cdots \in A$ and $f \otimes x \in L$ (identified with $f_1(Sx_1) f_3 \rtimes Sx_2 \rtimes f_2 \in H^{[0,2]} \subseteq B$), their product is computed to be:
$$
f_1(Sx_1)f^2_1(Sx_2)f_3(x^3_1) (\cdots \rtimes f^{-2} \rtimes x^{-1} \rtimes f_4 \rtimes Sx_3 \rtimes f^2_2f_2 \rtimes x^3_2 \rtimes f^4 \rtimes x^5 \rtimes \cdots)
$$
Thus, to prove the lemma, it suffices to verify that the map 
$$
g \otimes y \otimes f \otimes x \mapsto f_1(Sx_1)g_1(Sx_2)f_3(y_1) ( f_4 \otimes Sx_3 \otimes g_2f_2 \otimes y_2)
$$
of $H^* \otimes H \otimes H^* \otimes H$ to itself is a linear isomorphism.
We assert, and omit the straightforward but very computational proof, that
the map
$$
p \otimes z \otimes q \otimes w \mapsto p_1(Sw_1)q_1(S^{-1}z_2) (q_2Sp_2 \otimes w_2 \otimes p_3 \otimes S^{-1}z_1)
$$
is its inverse.
\end{proof}

\begin{proposition}\label{actionona}
The map $\gamma: L  \rightarrow End(B)$ given
by $\gamma_{(f \otimes x)}(b) = (f \otimes x)_1 b S((f \otimes x)_2)$ maps $A$ to itself.
\end{proposition}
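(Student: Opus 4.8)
The plan is to exhibit $\gamma$ as the adjoint action of the Hopf subalgebra $L\subseteq B$ (identified with its image under Lemma \ref{injective}) and then to whittle the claim down to a small, explicit set of checks by two independent generation arguments, one for $L$ and one for $A$.

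First I would record that $\gamma$ has the formal properties of an adjoint action. Writing $\Delta_L(\ell)=\ell_1\otimes\ell_2$, the antipode axioms give $\gamma_\ell(bc)=\gamma_{\ell_1}(b)\gamma_{\ell_2}(c)$ and $\gamma_\ell(1_B)=\epsilon_L(\ell)1_B$, while $\Delta_L$ being an algebra map and $S_L$ an anti-automorphism give $\gamma_{\ell\ell'}=\gamma_\ell\circ\gamma_{\ell'}$ and $\gamma_{1_L}=\mathrm{id}_B$. Thus $\gamma$ is an action of $L$ on $B$ making $B$ an $L$-module algebra, so $\gamma_\ell(A)\subseteq A$ for all $\ell$ will follow once it is checked on generators. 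Two reductions now cut the problem down. Since $\gamma_{\ell\ell'}=\gamma_\ell\circ\gamma_{\ell'}$ and (as one checks directly from the multiplication of $L$) $f\otimes x=(f\otimes 1)(\epsilon\otimes x)$, the algebra $L$ is generated by the subalgebras $H^*\otimes 1$ and $\epsilon\otimes H$, so it suffices to treat $\gamma_{f\otimes 1}$ and $\gamma_{\epsilon\otimes x}$. Both $H^*\otimes 1$ and $\epsilon\otimes H$ are sub-coalgebras of $L$, so each of these families is itself a module-algebra action; since $A$ is generated as an algebra by the $H^i$ with $i\in\mathbb Z\setminus\{0,1\}$, it is then enough to verify $\gamma_{f\otimes 1}(H^i)\subseteq A$ and $\gamma_{\epsilon\otimes x}(H^i)\subseteq A$ for each such $i$.

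Using the identifications recorded after Lemma \ref{injective} — namely $(\epsilon\otimes x)\mapsto Sx\in H^1$ and $(f\otimes 1)\mapsto f_2\rtimes 1\rtimes f_1\in H^{[0,2]}$, supported on positions $0$ and $2$ — together with $S_L(\epsilon\otimes x)=\epsilon\otimes S^{-1}x$ and $S_L(f\otimes 1)=Sf\otimes 1$, the two conjugations become $\gamma_{\epsilon\otimes x}(b)=(Sx_1)\,b\,x_2$ and $\gamma_{f\otimes 1}(b)=(f_1\otimes 1)\,b\,(Sf_2\otimes 1)$, i.e. conjugations by elements supported respectively on position $1$ and on positions $\{0,2\}$. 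Here the structural fact that $H^i$ and $H^j$ commute for $|i-j|\ge 2$ does most of the work: whenever $H^i$ commutes with the entire (finitely supported) conjugator, the identities $S(x_1)x_2=\epsilon(x)1$ and $(f_1\otimes 1)(Sf_2\otimes 1)=\epsilon_{H^*}(f)(\epsilon\otimes 1)=\epsilon_{H^*}(f)1_B$ collapse $\gamma$ to multiplication by a scalar, which visibly preserves $A$. This disposes of all $i$ except those adjacent to or overlapping the support: $i=2$ for $\gamma_{\epsilon\otimes x}$, and $i\in\{-1,2,3\}$ for $\gamma_{f\otimes 1}$.

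The remaining handful of cases I would settle by direct computation inside the finite-dimensional algebras $H^{[1,2]}$, $H^{[-1,0]}$, $H^{[0,2]}$ and $H^{[0,3]}$, using the explicit multiplication rule of Figure \ref{fig:multfig}. In each case the point is that, after expanding the conjugation and sliding commuting factors across $y$, the components in positions $0$ and $1$ contract via the counit and antipode to $\epsilon$ and $1$, leaving an element supported only on positions $\le -1$ and $\ge 2$, hence an element of $A$. (For example, for $y=1\rtimes g\in H^2$ one finds $\gamma_{\epsilon\otimes x}(y)=1\rtimes g_1(x)g_2\in H^2\subseteq A$.) I expect these boundary computations to be the main obstacle; the most delicate is $\gamma_{f\otimes 1}$ on $H^2$, where the conjugator and $y$ both have support in position $2$ and the required cancellation in position $0$ materialises only after summing the Sweedler components of $f$. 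This is precisely the sort of verification that, as with Lemmas \ref{mult} and \ref{injective}, is routine but computational.
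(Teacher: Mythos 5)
Your proposal is correct and follows essentially the same route as the paper: both arguments first use the fact that $\gamma$ is an action of $L$ on $B$ to reduce to the generators $f\otimes 1$ and $\epsilon\otimes x$ of $L$, and then verify stability of $A$ by explicit computation in the iterated crossed product. Your further reduction to the tensor factors $H^i$ of $A$ together with the commutation argument for $|i|$ large merely reorganizes the single direct computation that the paper performs on a general word of $A$ (whose displayed formulas indeed disturb only positions $-1$, $2$, $3$), so the boundary cases you defer are exactly the computations recorded in the paper's proof.
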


\begin{proof} The map $\gamma: L  \rightarrow End(B)$ 
is easily verified to be an action of $L$ on $B$ and so it suffices to check for $f \in H^*$, $x \in H$ and
$a \in A$, that 
$\gamma_{(f \otimes 1)}(a), \gamma_{(\epsilon \otimes x)}(a) \in A$.

Taking $a = \cdots \rtimes x^{-3} \rtimes f^{-2} \rtimes x^{-1} \rtimes \epsilon \rtimes 1 \rtimes f^2 \rtimes x^3 \rtimes f^4 \rtimes x^5 \rtimes \cdots$, we compute
\begin{eqnarray*}
\gamma_{(f \otimes 1)}(a)&=&f_2(x^{-1}_2)f_3(Sx^3_1) (\cdots \rtimes f^{-2} \rtimes x^{-1}_1 \rtimes \epsilon \rtimes 1 \rtimes f_1f^2Sf_4 \rtimes x^3_2 \rtimes f^4  \rtimes \cdots),\\
\gamma_{(\epsilon \otimes x)}(a)&=&f^2_1(x) (\cdots \rtimes f^{-2} \rtimes x^{-1} \rtimes \epsilon \rtimes 1 \rtimes f^2_2 \rtimes x^3 \rtimes f^4  \rtimes \cdots),
\end{eqnarray*}
both of which are clearly in $A$.
%
%
\end{proof}

\begin{proof}[Proof of Theorem \ref{main}]
The hypotheses of Lemma \ref{simple} are satisfied by $A \subseteq B$
and $L$, by Lemma \ref{mult} and Proposition \ref{actionona}. Thus, by its conclusion, $B$ is isomorphic as an algebra over $A$ to $A \rtimes L$, for some action of $L$ on $A$. This action is outer since $A \subseteq B$ is
irreducible by Proposition \ref{commutants}. Finally, by Theorem \ref{unique},
$L$ is unique up to isomorphism.
%
%
%
%
%
%
\end{proof}

The result asserted in our abstract is an easy corollary using Lemma \ref{hop}.

\begin{corollary}
Let $A \subseteq B$ be the derived pair of $H^{cop}$ ($\cong H^{op}$) for a finite-dimensional
Hopf algebra $H$. Then $B$ is isomorphic as an algebra over $A$ to
$A \rtimes D(H)$ for some outer action of $D(H)$ on $A$ and further
$D(H)$ is the only finite-dimensional Hopf algebra with this property.
\end{corollary}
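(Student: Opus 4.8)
The plan is to deduce this corollary directly from Theorem \ref{main}, applied not to $H$ itself but to the Hopf algebra $H^{cop}$, and then to translate the resulting statement back into one about $D(H)$ by means of Lemma \ref{hop}.

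First I would invoke Theorem \ref{main} with $H$ replaced throughout by $H^{cop}$. Since $H$ is finite-dimensional its antipode $S$ is bijective, so $H^{cop}$ is itself a finite-dimensional Hopf algebra (with antipode $S^{-1}$) and the theorem applies verbatim. Its conclusion is that the derived pair $A \subseteq B$ of $H^{cop}$ satisfies $B \cong A \rtimes D(H^{cop})^{cop}$ as an algebra over $A$ for some outer action, and that $D(H^{cop})^{cop}$ is the unique finite-dimensional Hopf algebra enjoying this property. This already supplies the entire structural content of the corollary, existence as well as uniqueness; all that remains is to recognise the Hopf algebra $D(H^{cop})^{cop}$ appearing here as $D(H)$.

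The identification is exactly where Lemma \ref{hop} does the work. That lemma gives $\tilde{D}(H^{cop}) \cong \tilde{D}(H)^{cop}$ as Hopf algebras. Applying the functor $(\cdot)^{cop}$ to both sides, and using that it is an involution so that $(\tilde{D}(H)^{cop})^{cop} = \tilde{D}(H)$, I obtain $\tilde{D}(H^{cop})^{cop} \cong \tilde{D}(H)$. Finally, since $\tilde{D}(-)$ and $D(-)$ are isomorphic avatars of the double (via $S \otimes S^{-1}$, as recorded in \S 3), the left-hand side is $D(H^{cop})^{cop}$ and the right-hand side is $D(H)$, whence $D(H^{cop})^{cop} \cong D(H)$ as Hopf algebras. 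Substituting this into the conclusion of the previous paragraph yields $B \cong A \rtimes D(H)$ for an outer action of $D(H)$ on $A$, with $D(H)$ the only finite-dimensional Hopf algebra having this property, as claimed.

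I do not expect a genuine obstacle here, since the corollary is essentially a bookkeeping consequence of Theorem \ref{main} and Lemma \ref{hop}; the only point requiring care is to keep the various occurrences of $(\cdot)^{cop}$ (and the passage between $D$ and $\tilde{D}$) straight, so that the double of $H^{cop}$ is correctly matched against the double of $H$. For completeness one may also justify the parenthetical $H^{cop} \cong H^{op}$, which follows from the standard Hopf algebra isomorphism $S : H \rightarrow H^{op,cop}$ upon applying $(\cdot)^{cop}$, although this remark is not needed for the main assertion.
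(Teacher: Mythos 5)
Your proposal is correct and is exactly the argument the paper intends: the authors give only the one-line remark that the corollary follows from Theorem \ref{main} via Lemma \ref{hop}, and your application of Theorem \ref{main} to $H^{cop}$ followed by the identification $D(H^{cop})^{cop} \cong \tilde{D}(H^{cop})^{cop} \cong \tilde{D}(H) \cong D(H)$ fills in precisely the bookkeeping they leave implicit.
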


\begin{remark}
Scattered throughout this note are various explicit calculations of inverses
(sometimes one-sided)
of maps of tensor powers of $H$ - for instance, in the proofs
of Lemmas \ref{difficult}, \ref{injective} and \ref{mult}. We hope to address the problem of finding general pictorial methods of doing so in a future publication.
\end{remark}

\end{document}